\title[Quasisymmetric mappings on variants of fractal percolation]{Quasisymmetric mappings on two variants of fractal percolation}
\author{Roope Anttila}
\address[Roope Anttila]
        {University of St Andrews \\ 
         Mathematical Institute\\ 
         St Andrews, KY16 9SS\\ 
         Scotland}
\email{ra216@st-andrews.ac.uk}
\author{Sylvester Eriksson-Bique}
\address[Sylvester Eriksson-Bique]{Department of Mathematics and Statistics\\  P.O. Box 35, FI-40014, University of Jyväskylä,
 Finland }
\email{sylvester.d.eriksson-bique@jyu.fi}
\author{Aleksi Pyörälä}
\address[Aleksi Pyörälä]{Department of Mathematics and Statistics\\  P.O. Box 35, FI-40014, University of Jyväskylä,
 Finland }
\email{aleksi.pyorala@gmail.com}
\subjclass[2020]{Primary: 30L10; Secondary: 28A80, 60D05}
\keywords{conformal dimension, fractal percolation, quasisymmetry}
\thanks{RA was financially supported the Magnus Ehrnrooth foundation and EPSRC, grant no. EP/Z533440/1. SEB was supported by the Research Council of Finland via the project \emph{GeoQuantAM: Geometric and Quantitative Analysis on Metric spaces}, grant no. 354241. AP was supported by the Research Council of Finland via grants 354241 and 355453.}
\newcommand{\PP}{\mathbb{P}}
\newcommand{\E}{\mathcal{E}}
\newcommand{\F}{\mathcal{F}}
\newcommand{\cond}{\ \big|\ }
\newcommand{\N}{\mathbb{N}} 
\newcommand{\R}{\mathbb{R}}
\newcommand{\mtt}[1]{\mathtt{#1}}
\newtheorem{theorem}{Theorem}[section]
\newtheorem{proposition}[theorem]{Proposition}
\newtheorem{corollary}[theorem]{Corollary}
\newtheorem{lemma}[theorem]{Lemma}
\newtheorem{question}{Question}
\DeclareMathOperator{\dist}{dist}
\DeclareMathOperator{\dima}{dim_A}
\DeclareMathOperator{\dimh}{dim_H}
\DeclareMathOperator{\Cdimh}{\mathcal{C}dim_H}
\begin{document}
\begin{abstract}
    We study quasisymmetric maps on two variants of the classical fractal percolation model: the fat and dense fractal percolations. We show that, almost surely conditioned on non-extinction, the Hausdorff dimension of the fat fractal percolation cannot be lowered with a quasisymmetry and the Hausdorff dimension of the dense fractal percolation cannot be lowered with a power quasisymmetry.
\end{abstract}
\maketitle

\section{Introduction}
For a given homeomorphism $\eta\colon [0,\infty)\to[0,\infty)$, a function $f\colon X\to Y$ between metric spaces $(X,d)$ and $(Y,\rho)$ is called an \emph{$\eta$-quasisymmetry} if
\begin{equation*}
    \frac{\rho(f(x),f(y))}{\rho(f(x),f(z))}\leq \eta\left(\frac{d(x,y)}{d(x,z)}\right),
\end{equation*}
for all $x,y,z\in X$ with $x\ne z$. A function $f$ is called a \emph{quasisymmetry} if it is an $\eta$-quasisymmetry for some homeomorphism $\eta\colon [0,\infty)\to[0,\infty)$. We will often use the term \emph{distortion function} for homeomorphisms $\eta\colon [0,\infty)\to[0,\infty)$. An important subclass of quasisymmetries are those with $\eta(t)=C\max\{t^{\beta},t^{\frac{1}{\beta}}\}$, for some $0<\beta\leq1$, which we call \emph{($\beta$-)power quasisymmetries}.

Quasisymmetries generalize bi-Lipschitz maps by roughly preserving relative sizes and shapes of sets with similar size that are close together, but allowing sets that are either well separated or that have wildly different sizes to be distorted in different ways. Classifying metric spaces up to quasisymmetric equivalence is a central problem in geometric function theory, and in this context, quasisymmetric invariants play an important role. Conformal dimension, which was introduced by Pansu in \cite{Pansu}, is one of the best known such invariants, see also \cite{bourdonpajot}. 

\subsection{Conformal dimension}
Unlike bi-Lipschitz maps, quasisymmetries can alter the Hausdorff dimension of sets. In fact, since the identity map from $(X,d)$ to its snowflake $(X,d^{\varepsilon})$ with $0<\varepsilon<1$ is a quasisymmetry, and since snowflaking the metric changes the Hausdorff dimension by a factor of $1/\varepsilon$, the Hausdorff dimension of a metric space with positive Hausdorff dimension can be made arbitrarily large with quasisymmetries. On the other hand for some spaces, like $\R^d$, Hausdorff dimension cannot be lowered by quasisymmetries, which motivates the definition of \emph{conformal (Hausdorff) dimension}, defined for a metric space $X$ by
\begin{equation*}
    \Cdimh X\coloneqq \inf\{\dimh f(X)\colon f \text{ is a quasisymmetry}\}.
\end{equation*}
Calculating the conformal dimension of a given metric space is a challenging problem and even determining whether Hausdorff dimension can be lowered by a quasisymmetry is often highly non-trivial. Spaces whose dimension cannot be lowered by quasisymmetries are called \emph{minimal} for conformal dimension, and the prototypical examples are sets of the form $K\times [0,1]$, where $K$ is a compact subset of $\R^d$, see \cite[Proposition 4.1.11]{MackayTyson2010}. Totally disconnected examples were given in \cite{BishopTyson}.

One can of course replace Hausdorff dimension in the definition with other notions of dimension to obtain a family of quasisymmetric invariants. In addition to the conformal Hausdorff dimension, much attention has been given to the \emph{conformal Assouad dimension}, defined by replacing the Hausdorff dimension in the definition above, by the Assouad dimension
\begin{equation*}
    \dima X\coloneqq \inf\Bigg\{s>0\colon \exists C>0,\,\forall 0<r<R,x\in X,\,N_r(X\cap B(x,R))\leq C\left(\frac{R}{r}\right)\Bigg\},
\end{equation*}
where $N_r(A)$ denotes the smallest number of open balls of radius $r>0$ needed to cover $A$. Conformal Assouad dimension is often easier to handle than conformal Hausdorff dimension, see e.g. \cite{KeithLaakso2004,Muruganconf}, and for regular enough spaces, namely quasiself-similar spaces or CLP-spaces, these notions were recently shown to coincide by the second author \cite{Eriksson-Bique2024}. A continuum of other variants of conformal dimension were recently studied in \cite{FraserTyson2025}.

\subsection{Conformal dimension of random fractals}
Recently some progress has been made in understanding the quasiconformal geometry of random objects. In \cite{RossiSuomala2021}, Rossi and Suomala studied quasisymmetric mappings on the classical fractal percolation, which is a random subset of the unit cube $[0,1]^d$ constructed by dividing it into $N^d$ subcubes of side length $1/N$, retaining each independently with probability $p$ and discarding with probability $1-p$ and repeating the process inside all retained subcubes \emph{ad infinitum}. A key feature of the fractal percolation model is that the distribution of the number of offspring at each step forms a \emph{Galton-Watson process} and in particular, the expected number of offspring of each cube is the same at each step of the construction; see \cite[Section 5]{LyonsPeres2016} for more background on such processes. Rossi and Suomala showed that classical fractal percolation is, almost surely conditioned on non-extinciton, \emph{not} minimal for the conformal dimension, see also \cite{FraserTyson2025} for discussion on how other variants of conformal dimension behave for fractal percolation. As pointed out by the authors, the main result of \cite{RossiSuomala2021} easily extends to many other random fractals with underlying Galton-Watson processes. On the other hand, not many examples of random spaces which are minimal for conformal dimension are known. 

This work can be viewed as a natural extension of \cite{RossiSuomala2021}: what happens if we break the Galton-Watson process underlying the fractal percolation and instead allow the expected number of offspring to vary with each construction step? There are essentially two ways to achieve this phenomenon. Firstly, we may vary the retention parameter at each construction step, that is pick a sequence of probabilities $p_n$ and retain each subcube of a cube at level $n$ of the construction with probability $p_n$. Secondly, we may keep the retention probability fixed and vary the number of cubes in the subdivision at each construction step, that is pick a sequence of natural numbers $N_n$ and divide each retained cube of level $n$ into $N_{n+1}^d$ subcubes and retain each with probability $p$. Next we describe these models in detail.

\section{Fat and dense fractal percolations}\label{sec:fat-percolation}
\begin{figure}
    \centering
    \includegraphics[width=0.3\linewidth]{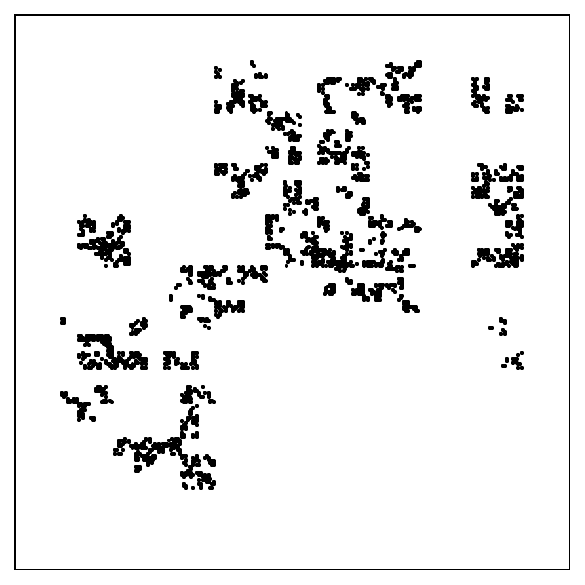}
    \hspace{0.2cm}
    \includegraphics[width=0.3\linewidth]{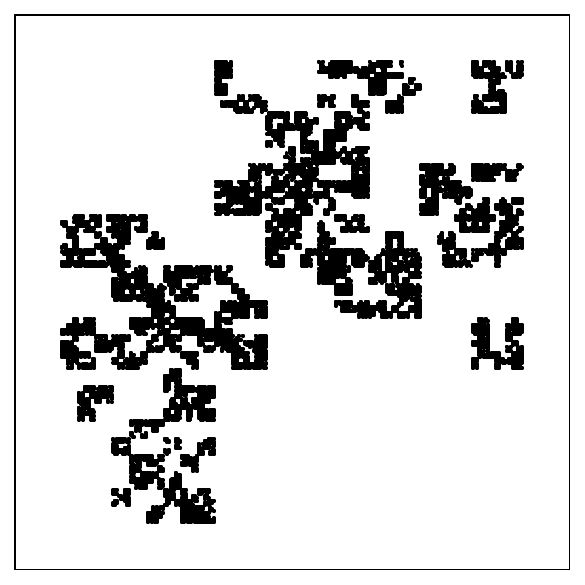}
    \hspace{0.2cm}
    \includegraphics[width=0.3\linewidth]{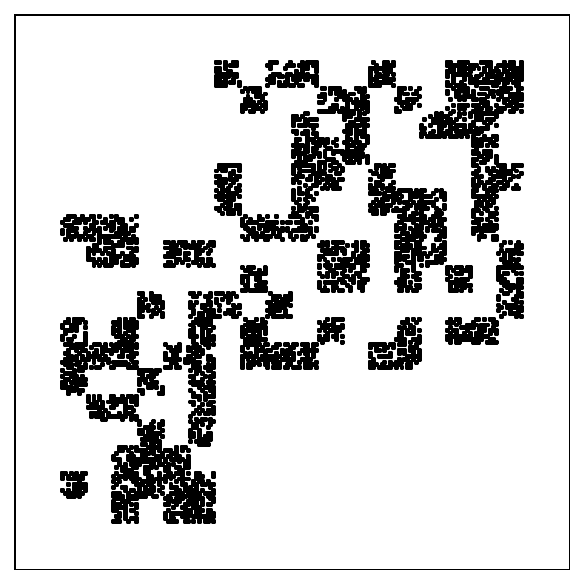}
    \caption{On the left a realization of the $(3,0.5)$-fractal percolation, in the middle, a realization of the $(3, \mathbf{p})$-fat fractal percolation with $p_0 = 0.5$, $p_1 = 0.6$, $p_2 = 0.65,\ldots$, and on the right a realization of the $(\mathbf{N},0.5)$-dense fractal percolation with $N_1=3$, $N_2=6$, $N_3=9,\ldots$.}
    \label{fig:fatpercolation}
\end{figure}

\subsection{Fat fractal percolation}
Fix an integer $N\geq 2$ and a sequence $\textbf{p} := (p_n)_{n\in\N}$ such that $0\leq p_n\leq 1$ for all $n$, and construct a random set $F = F(N, \textbf{p}) \subseteq [0,1]^d$ as follows: Divide the cube $[0,1]^d$ into $N^d$ congruent subcubes, retain each one with probability $p_1$ and discard it with probability $1-p_1$. Denote the set of surviving cubes by $\F_1$. For $n\geq 0$ and a set $\F_n$ of surviving subcubes as above, we define the set $\F_{n+1}$ by repeating the above procedure for each cube in $\F_n$, with the parameter $p_1$ replaced by $p_{n+1}$, and let $\F_{n+1}$ denote the set of all surviving subcubes of side length $N^{-(n+1)}$ obtained this way. Finally, we set 
\begin{equation*}
    F = \bigcap_{n\in\N} \bigcup_{Q\in \F_n} Q\subseteq\R^d
\end{equation*}
and call the set $F$ the $(N, \textbf{p})$-fractal percolation; see Figure \ref{fig:fatpercolation}. Motivated by \cite{RossiSuomala2021}, it is natural to ask the following.
\begin{question}
    Let $N\geq 2$. For which sequences $\textbf{p}$ is the $(N, \textbf{p})$-fractal percolation minimal for conformal dimension with positive probability?
\end{question}


A subclass of the random sets $F(N,\textbf{p})$ which has attracted some interest is that of the sequence $\textbf{p}$ converging to $1$; Given a sequence $\textbf{p} = (p_n)_{n\in\N}$ with $\lim_{n\to\infty} p_n = 1$, the set $F = F(N, \textbf{p})$ is called a \emph{$(N,\mathbf{p})$-fat fractal percolation}. Conditioned on the non-extinction of $F$, it is not difficult to see that $\dimh F = d$ almost surely, regardless of the choice of the converging sequence $\textbf{p}$. Nevertheless, the speed of this convergence has significant implications to finer geometric properties of $F$, of which we list a few below. All of the claims hold almost surely conditioned on non-extinction of $F$.
\begin{enumerate}
    \item If $\prod_{n\in\N} p_n^{N^{dn}} = 0$, then $F$ has empty interior.
    \item If $\prod_{n\in\N} p_n >0$, then $F$ has positive Lebesgue measure.
    \item If $\prod_{n\in\N} p_n^{N^{dn}} > 0$, then $F$ is a finite union of closed cubes.
\end{enumerate}
These points are contained in \cite[Theorem 1.9]{BromanEtAl2012}. We refer to \cite{BromanEtAl2012, ChayesPemantlePeres1997} and the references therein for more geometric and topological properties of $F$. Of these known properties of $F$, only the point (3) above ensures that $F$ is minimal for conformal dimension. In fact, for any $d\geq 1$, there exists a set $A\subseteq \R^d$ with $\lambda^d(A) = 1$ and $\Cdimh A = 0$, where $\lambda^d$ denotes the $d$-dimensional Lebesgue measure; see \cite{Romney2019, Tukia1989}. Our first main result demonstrates that regardless of the speed of convergence, fat fractal percolation is almost surely minimal for conformal dimension.
\begin{theorem}\label{thm:main1}
    Let $N\geq 2$. For any sequence $\textbf{p} = (p_n)_{n\in\N}$ such that $\lim_{n\to\infty} p_n = 1$, we have
    \begin{equation*}
         \Cdimh F = \dimh F = d,
    \end{equation*}
    almost surely conditioned on the non-extinction of $F = F(N, \textbf{p})$.
\end{theorem}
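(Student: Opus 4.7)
My plan is to realize the unit cube $[0,1]^d$ as a pointed Gromov-Hausdorff weak tangent of $F$, almost surely on non-extinction, and to use its minimality for $\Cdimh$ (via \cite[Proposition 4.1.11]{MackayTyson2010}) to deduce $\Cdimh F \geq d$. The reverse bound $\Cdimh F \leq \dimh F \leq d$ is immediate since $F \subseteq [0,1]^d$.

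The main probabilistic input is the following: for each fixed $k \in \N$, almost surely conditioned on non-extinction, for all sufficiently large $n$ there exists a cube $Q \in \F_n$ whose $N^{dk}$ level-$(n+k)$ descendants all belong to $\F_{n+k}$, and the subpercolation rooted at each such descendant is itself non-extinct. Three facts go into this: (i) on non-extinction $|\F_n| \to \infty$ almost surely, by standard multi-type branching theory; (ii) the probability that a given cube $Q \in \F_n$ has all of its level-$(n+k)$ descendants in $\F_{n+k}$ equals $\prod_{j=1}^{k} p_{n+j}^{N^{dj}}$, which tends to $1$ as $n \to \infty$ since $p_n \to 1$ and the exponents are fixed; and (iii) the survival probability of a subpercolation rooted at a level-$m$ cube tends to $1$ as $m \to \infty$, for the same reason. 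A Borel-Cantelli-style argument, using the independence of descendants of distinct cubes in $\F_n$, then yields the claim.

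Taking $k_n \to \infty$ slowly and selecting cubes $Q_n \in \F_n$ as above with $k = k_n$, the rescaled set $F_n := N^n(F \cap Q_n - x_n) \subseteq [0,1]^d$ (with $x_n$ a corner of $Q_n$) contains a point in every level-$k_n$ subcube of $[0,1]^d$, hence is $(\sqrt{d}\,N^{-k_n})$-dense in $[0,1]^d$ and converges to $[0,1]^d$ in Hausdorff distance. Given any $\eta$-quasisymmetry $f \colon F \to Y$, let $R_n := \diam f(F \cap Q_n)$ and consider the rescaled $\eta$-quasisymmetries $f_n \colon F_n \to Y_n$ defined by $f_n(y) := (f(x_n + N^{-n}y) - f(x_n))/R_n$. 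By a standard Ascoli-Arzelà argument for $\eta$-quasisymmetries with normalized sources and targets, a subsequence of $(f_n)$ converges uniformly to an $\eta$-quasisymmetry $f_\infty \colon [0,1]^d \to Y_\infty$, where $Y_\infty$ is the pointed Hausdorff limit of the $Y_n$. The minimality of $[0,1]^d$ then forces $\dimh Y_\infty \geq d$.

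The main obstacle is the passage from the weak-tangent bound $\dimh Y_\infty \geq d$ to the sought-after bound $\dimh Y \geq d$, since Hausdorff dimension does not in general pass to weak tangents, even for doubling compact sets (e.g., $\{0\} \cup \{1/n \colon n \in \N\}$ has $\dimh = 0$ but a one-dimensional weak tangent at the origin). The plan is to exploit the stronger fact that $Y_\infty$ is realized as the uniform limit of $\eta$-quasisymmetric images $Y_n$ of the dense sets $F_n$: a lower-semicontinuity argument for the Hausdorff $s$-content tailored to this specific type of convergence should give a uniform lower bound $\mathcal{H}^s_\infty(Y_n) \geq c$ for $s < d$, which after unscaling yields $\mathcal{H}^s_\infty(f(F)) \geq c R_n^s > 0$, and thus $\dimh f(F) \geq d$ upon letting $s \nearrow d$. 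Making this content transfer precise is the technical heart of the argument, as more naive arguments only yield a lower bound on the Assouad dimension of $f(F)$.
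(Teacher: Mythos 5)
There is a genuine gap, and it sits exactly where you place it: the passage from $\dimh Y_\infty\geq d$ to $\dimh f(F)\geq d$. The weak-tangent scheme you describe controls Assouad-type quantities but not Hausdorff dimension, and the "lower-semicontinuity of Hausdorff content" you hope for is false for this type of convergence. Hausdorff content is upper, not lower, semicontinuous along Hausdorff convergence of compacta, and the obstruction is not pathological: take $X_n$ to be a finite $\tfrac1n$-net of $[0,1]^d$ and $f_n$ the identity (an $\eta$-quasisymmetry with $\eta(t)=t$). Then $Y_n=X_n$ is finite, $\mathcal H^s_\infty(Y_n)\to 0$ fails to be bounded below for any $s>0$, yet $Y_\infty=[0,1]^d$. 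Your sets $F_n$ are only guaranteed to be $\sqrt d\,N^{-k_n}$-dense, so from the point of view of the limit they are indistinguishable from such nets; any uniform lower bound on $\mathcal H^s_\infty(Y_n)$ must use the structure of $F\cap Q_n$ at \emph{all} scales below $N^{-(n+k_n)}$, which the weak tangent discards entirely. In other words, the "technical heart" you defer is not a refinement of the compactness argument but the entire proof, and it cannot be extracted from the limit object alone.

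What is needed is a uniform multi-scale statement: inside $F$ one must find a subset in which, at \emph{every} level and in \emph{every} surviving cube, all but a controlled number of subcubes survive, and then build a Frostman measure on the image directly, propagating at each level an inequality of the form $|f(Q_0)|^\alpha\leq\sum_{Q}|f(Q)|^\alpha$ over the children $Q$ of $Q_0$ (this uses Lemma \ref{lemma-boundeddistortion} to show a fixed $\eta$ cannot open up the small gaps too much, plus H\"older's inequality). This is what Proposition \ref{prop-fatcantordimension} does for $(N,m)$-fat Cantor sets, and the probabilistic work (Theorem \ref{thm-completesubtree}) is correspondingly stronger than your step: one needs an infinite-depth $(N^{md}-1)$-ary subtree, not merely one generation of full branching at infinitely many levels. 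Your probabilistic observations (i)--(iii) are sound as far as they go, but they produce only finite-depth density and hence cannot feed a Hausdorff dimension bound. If you want to keep a compactness flavour, you would still have to prove the uniform content bound $\mathcal H^s_\infty(f(F\cap Q_n))\gtrsim R_n^s$ by hand, and the only available mechanism is the mass-distribution argument above.
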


\subsection{Dense fractal percolation}

Another natural variant of the fractal percolation is constructed as follows: Let $0<p<1$ and $\mathbf{N}=(N_n)_n$ be a sequence of integers with $N_n\geq 2$ for all $n$. Construct a random set $E = E(\mathbf{N}, p) \subseteq [0,1]^d$ as follows: Divide the cube $[0,1]^d$ into $N_1^d$ congruent subcubes, retain each one with probability $p$ and discard it with probability $1-p$. Denote the set of surviving cubes by $\E_1$. For $n\geq 0$ and a set $\E_n$ of surviving subcubes as above, we define the set $\E_{n+1}$ by repeating the above procedure for each cube in $\E_n$, with the parameter $N_1$ replaced by $N_{n+1}$, and let $\E_{n+1}$ denote the set of all surviving subcubes of side length $\prod_{k=1}^{n+1}N_k^{-1}$ obtained this way. Finally, we set 
\begin{equation*}
    E = \bigcap_{n\in\N} \bigcup_{Q\in \E_n} Q\subseteq\R^d
\end{equation*}
and call the set $E$ the $(\mathbf{N},p)$-fractal percolation; again see Figure \ref{fig:fatpercolation}. If the sequence $(N_n)_n$ is increasing, we call the set $E$ a \emph{$(\mathbf{N},p)$-dense fractal percolation}.

\begin{question}
    Let $0<p<1$. For which sequences $\textbf{N}$ is the $({\bf N}, p)$-fractal percolation minimal for conformal dimension with positive probability?
\end{question}

It turns out that the major difference between the dense and the fat fractal percolation is that the former may contain, at many construction levels, holes, whose size is much larger than the cubes at the next level of the construction. This causes some technical difficulties in adapting the proof of Theorem \ref{thm:main1} to this setting. In particular, we were only able to prove that the Hausdorff dimension of the dense fractal percolation cannot be lowered by power quasisymmetries. This is our second main result.
\begin{theorem}\label{thm:main2}
    Let $p>0$, $\mathbf{N}=(N_n)_{n\in\N} \subset \N$ be an increasing sequence and $E = E(\mathbf{N}, p)$ be the corresponding dense fractal percolation. Almost surely conditioned on the non-extinction of $E$, we have
    \begin{equation*}
        \dimh f(E) = \dimh E = d,
    \end{equation*}
    for any power quasisymmetry $f\colon E\to f(E)$.
\end{theorem}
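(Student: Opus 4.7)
The plan is to prove $\dimh E=d$ almost surely on non-extinction via a Frostman measure $\mu$, and then to push $\mu$ forward by the power quasisymmetry $f$ and argue that the pushforward $f_\ast\mu$ still satisfies a Frostman-type bound with exponent $d$, yielding $\dimh f(E)=d$. The power-quasisymmetry assumption will enter exactly at the point where an initial bi-H\"older distortion bound must be upgraded to a bi-Lipschitz-like scaling at $\mu$-typical points.

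For the dimension equality $\dimh E=d$ I would introduce the natural random measures $\mu_n$ distributing unit mass uniformly on $\bigcup_{Q\in\E_n}Q$. Because $N_n\to\infty$, the expected number of surviving children of each cube, namely $pN_{n+1}^d$, diverges, so Chebyshev gives tight concentration for the martingale of masses in any fixed cube. A Borel--Cantelli argument summed over scales then produces a weak limit $\mu$ supported on $E$ with $\mu(B(x,r))\leq C_x\,r^d$ at $\mu$-almost every $x\in E$ and all $r$ below a random threshold, so that $\dimh E\geq d$.

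For the bound $\dimh f(E)\geq d$ the structural input is that, at $\mu$-a.e.\ $x\in E$, the rescaled sets $(E-x)/r_n$ with $r_n=\prod_{k=1}^n N_k^{-1}$ accumulate on the full cube $[0,1]^d$ in the Hausdorff metric. Indeed, inside the level-$n$ cube of $E$ containing $x$, the surviving level-$(n+1)$ subcubes form a Bernoulli configuration with density $p$ on a grid of mesh $1/N_{n+1}\to 0$, and are therefore $\varepsilon$-dense in the ambient cube with overwhelming probability for $n$ large; each such retained child also almost surely contributes a point of $E$ by the standard non-extinction estimates. Given this density, I would consider the rescalings $f_n(y)=(f(x+r_ny)-f(x))/s_n$ with $s_n=\diam f(B(x,r_n))$, use the uniform bi-H\"older bounds that a $\beta$-power quasisymmetry satisfies on each fixed bounded range of ratios to extract a subsequential uniform limit $\phi\colon[0,1]^d\to\R^d$, and verify that $\phi$ is bi-Lipschitz by combining the power form of $\eta$ with a Lebesgue differentiation of $\mu$ along the rescaling sequence. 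Transferring the resulting absolutely continuous limit back to a Frostman bound for $f_\ast\mu$ on $f(E)$ then delivers $\dimh f(E)\geq d$.

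The principal obstacle is precisely the promotion of $\phi$ from bi-H\"older to bi-Lipschitz, and this is where the power form $\eta(t)=C\max\{t^\beta,t^{1/\beta}\}$ is used essentially. The holes in $E$ at level $n$ have diameter of order $r_n$, while the next-level cubes have diameter $r_{n+1}=r_n/N_{n+1}$ with $N_{n+1}\to\infty$, so across consecutive construction scales the ratios $d(x,y)/d(x,z)$ appearing in the definition of $\eta$ diverge. Only a distortion function with polynomial growth at $0$ and $\infty$ yields the scale-invariant comparisons needed to keep $\phi$ from degenerating on the tangent $[0,1]^d$; a general quasisymmetry can have $\eta$ growing faster than any power near $0$ or $\infty$, which is exactly what blocks the argument in the general case and explains why Theorem \ref{thm:main2} is stated only for power quasisymmetries.
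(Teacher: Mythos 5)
There is a genuine gap, and it sits at the heart of your argument: the promotion of the blow-up limit $\phi$ to a bi-Lipschitz map. Power quasisymmetries include snowflake-type embeddings (e.g.\ the identity from $(X,d)$ to $(X,d^{\varepsilon})$ composed with any embedding of the snowflake), and the blow-ups of such maps along \emph{any} sequence of scales remain snowflake-type, never bi-Lipschitz; no ``Lebesgue differentiation of $\mu$'' can rule this out, because there is no differentiation theory for maps into general metric spaces, and here the target $f(E)$ is an arbitrary metric space, so your limit $\phi$ does not even land in $\R^d$ but in some pointed Gromov--Hausdorff limit of rescalings of $f(E)$. Even setting this aside, knowing that $[0,1]^d$ arises as a tangent at $\mu$-a.e.\ point and that some tangent map is nice would at best control Assouad-type quantities along those scales; Hausdorff dimension of the image is a global quantity and is not bounded below by properties of weak tangents. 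Your final step, ``transferring the absolutely continuous limit back to a Frostman bound for $f_*\mu$,'' is exactly the missing uniform-across-scales control, and nothing in the proposal supplies it.

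What actually makes the theorem true is a quantitative mass-distribution argument carried out directly on the image. The paper shows (Lemma \ref{lemma-sub-additive}) that because the vertical gaps inside a level-$n$ cube are of relative size at most $\log N_{n+1}/N_{n+1}$, chaining the bounded-distortion estimate (Lemma \ref{lemma-boundeddistortion}) along columns of adjacent dyadic subcubes yields $|f(Q)|^{\alpha}\le\sum_{Q'}|f(Q')|^{\alpha}$ for every $\alpha<d$ at a $\gamma$-fraction of the dyadic scales between consecutive construction levels; the power form of $\eta$ is used \emph{only} to interpolate the measure across the remaining $(1-\gamma)$-fraction of scales, where the relative gaps are too large for the chaining to work. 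This is where the power hypothesis genuinely enters --- not in any bi-Lipschitz upgrade. Separately, your probabilistic step (``retained children are $\varepsilon$-dense with overwhelming probability'') is in the right spirit but too weak: one needs a deterministic dense Cantor subset of $E$ with the gap bound holding at \emph{all} levels simultaneously, which the paper extracts via the $(M_k-1)_k$-subtree argument of Theorem \ref{thm-completesubtree} rather than a scale-by-scale Borel--Cantelli estimate. Your construction of the Frostman measure $\mu$ on $E$ itself is fine but is not the difficult part of the statement.
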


\subsection{On the proofs}
The proofs of Theorems \ref{thm:main1} and \ref{thm:main2} follow a similar idea, with the proof of Theorem \ref{thm:main2} being somewhat more involved. A substantial portion of the work is done in a deterministic setting, and Theorems \ref{thm:main1} and \ref{thm:main2} follow from the deterministic results by showing that realizations of the percolation processes almost surely, conditioned on non-extinction, contain suitable deterministic subsets.

The key phenomenon we exploit in the proof of Theorem \ref{thm:main1} is that even though the Hausdorff dimension of a given set can often be decreased by enlarging the ``holes'' in the set at all scales by a properly chosen quasisymmetry, an $\eta$-quasisymmetry for a \emph{fixed} $\eta$ cannot enlarge \emph{small} holes by too much. An illustrating example is the following: While any set $E\subset [0,1]$ with $\dimh E<1$ has conformal dimension $0$, for any \emph{fixed} distortion function $\eta$ and any $0<s<1$, there exists a set $E\subset [0,1]$ with $\dimh E<1$, such that $\dimh f(E)\geq s$ for \emph{any} $\eta$-quasisymmetry $f: E\to f(E)$, see Proposition \ref{prop-fatcantordimension}.

Similarly, the proof of Theorem \ref{thm:main2} relies on finding suitable thick enough subsets in the dense fractal percolation. It turns out that with a very large probability, the largest hole in a level $n$ cube $Q$ in the dense fractal percolation is no larger than $\frac{\log N_{n+1}}{N_{n+1}}$ in relative size. These holes are small enough, that we get nice estimates on the the image of $E\cap Q$ under a quasisymmetry $f$ at many scales between levels $n$ and $n+1$, see Lemma \ref{lemma-sub-additive}. For power quasisymmetries, we get additional control on the remaining scales, which is enough to show that the Hausdorff dimension cannot be lowered by $f$. The deterministic results we need are proved in Section \ref{sec-deterministic}.

To show that the fat and dense fractal percolations contain suitable deterministic subsets with probability one, in Section \ref{sec-branching} we adapt results on the existence of $k$-ary subtrees inside Galton-Watson trees from \cite{Chayes,Pakes}. The proofs of Theorems \ref{thm:main1} and \ref{thm:main2} are then finished in Section \ref{sec-proofs}.

\subsection{Notation}
We denote the diameter of a subset $A$ of a metric space by $|A|$. We leave the dependence on the metric, which should be clear from the context, implicit. The distance between sets $A$ and $B$ is denoted by $\dist(A,B)\coloneqq \inf\{d(x,y)\colon x\in A,\,y\in B\}$. If $E\subset \R^2$, $f:E\to f(E)$ is a mapping into an arbitrary metric space and $Q\subset \R^2$, we often write $f(Q)$ for $f(Q\cap E)$ to slightly simplify notation. If $\mathcal{A}$ is any collection of subsets of a metric space $X$ and $B\subset X$, we let $\mathcal{A}(B)=\{A\in\mathcal{A}\colon A\subset B\}$. Given a set $A$ and functions $f,g\colon A\to \R$, we write $f\lesssim g$ if there exists a constant $C$, such that $f(a)\leq Cg(a)$ for all $a\in A$. Similarly, we write $f\gtrsim g$ if $g\lesssim f$. Often $A=\N$ and the constant $C$ may depend on all other quantities except for the indices $a\in\N$. Finally, we let $\pi:\R^d\to\R^{d-1}$ denote the orthogonal projection to the first $d-1$ coordinates. 

\section{Fat and dense Cantor sets}\label{sec-deterministic}
In this section we study quasisymmetric mappings on two slightly different but related classes of deterministic fractals: fat Cantor sets and dense Cantor sets. An observant reader might conjecture that these have something to do with fat and dense fractal percolations, and they would be correct. Indeed, in Section \ref{sec-proofs} the proofs of our main results lean on showing that almost all realizations of fat and dense fractal percolations contain large fat and dense Cantor sets, respectively.

Let us record two simple but crucial lemmas, starting with \cite[Theorem 2.5]{MackayTyson2010}.

\begin{lemma}\label{lemma-boundeddistortion}
Let $f: X \to Y$ be a $\eta$-quasisymmetry. If $A\subseteq B\subseteq X$ are sets with $0<|A|\leq|B| <\infty$, then $|f(B)|<\infty$ and
\begin{equation*}
    2^{-1}\eta\left(\frac{|B|}{|A|}\right)^{-1} \leq \frac{|f(A)|}{|f(B)|} \leq \eta\left(2\frac{|A|}{|B|}\right).
\end{equation*}
\end{lemma}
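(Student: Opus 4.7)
Both inequalities can be proved by essentially the same recipe: apply the quasisymmetry condition to a triple $(x, y, z)$ where the comparison point $z$ is chosen, via the triangle inequality, to lie at distance at least half of the relevant diameter from the base point $x$. The factor of $2$ in the statement is precisely the cost of this halving trick.

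For the upper bound my plan is as follows. Fix $\epsilon > 0$ and select $b_1, b_2 \in B$ with $d(b_1, b_2) \geq (1-\epsilon)|B|$. For each $a \in A \subseteq B$, the triangle inequality ensures $d(a, b_{i(a)}) \geq (1-\epsilon)|B|/2$ for some index $i(a) \in \{1,2\}$. Applying the quasisymmetry inequality to the triple $(a, a', b_{i(a)})$ for arbitrary $a, a' \in A$ then gives
\begin{equation*}
\rho(f(a), f(a')) \leq \eta\!\left(\frac{d(a, a')}{d(a, b_{i(a)})}\right) \rho(f(a), f(b_{i(a)})) \leq \eta\!\left(\frac{2|A|}{(1-\epsilon)|B|}\right) |f(B)|.
\end{equation*}
Taking the supremum over $a, a' \in A$ and letting $\epsilon \to 0$ by continuity of $\eta$ yields the upper bound.

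For the lower bound I would dualize the construction: pick $a_1, a_2 \in A$ with $d(a_1, a_2) \geq (1-\epsilon)|A|$ and use $a_1$ as the base point. For any $b \in B$, the quasisymmetry inequality applied to $(a_1, b, a_2)$ gives
\begin{equation*}
\rho(f(a_1), f(b)) \leq \eta\!\left(\frac{d(a_1, b)}{d(a_1, a_2)}\right) \rho(f(a_1), f(a_2)) \leq \eta\!\left(\frac{|B|}{(1-\epsilon)|A|}\right) |f(A)|.
\end{equation*}
The triangle inequality in $Y$ then gives $\rho(f(b), f(b')) \leq 2\eta(|B|/((1-\epsilon)|A|))|f(A)|$ for all $b, b' \in B$; taking the supremum and sending $\epsilon \to 0$ produces $|f(B)| \leq 2\eta(|B|/|A|)|f(A)|$, which rearranges to the claimed lower bound. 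Finiteness of $|f(B)|$ falls out of this same display, provided $|f(A)| < \infty$; this last point follows from one more application of the quasisymmetry inequality with a fixed pair in $A$ at positive distance as base and comparison points (the existence being ensured by $|A| > 0$).

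There is no genuinely hard step here; the lemma is essentially bookkeeping once the halving trick is identified. The only points that require care are keeping the comparison denominators strictly positive — guaranteed by $|A| > 0$ and small enough $\epsilon$ — and invoking continuity of $\eta$ at $2|A|/|B|$ and at $|B|/|A|$ to absorb the $(1-\epsilon)$ factors in the final limits.
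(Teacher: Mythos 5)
Your proof is correct, and it is essentially the standard argument (the halving trick via the triangle inequality, applied once with base points in $B$ for the upper bound and once with base points in $A$ for the lower bound) that underlies the proof of this result in the cited reference \cite[Theorem 2.5]{MackayTyson2010}; the paper itself only quotes the lemma and gives no proof. All the delicate points — positivity of the denominators, continuity of $\eta$ to remove the $(1-\epsilon)$ factors, and deducing $|f(B)|<\infty$ from the boundedness of $f(A)$ — are handled correctly.
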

An application of this lemma gives the following variant, which is useful if $X$ only contains relatively small gaps, i.e. if $|A\cup B|\approx |A|\approx |B|$.
\begin{lemma}\label{lemma-boundeddistortion-dist}
Let $f: X \to Y$ be a $\eta$-quasisymmetry and let $A,B\subset X$ be compact. Then
\begin{equation*}
    \frac{\dist(f(A),f(B))}{|f(A)\cup f(B)|} \leq \eta\left(2\frac{\dist(A,B)}{|A\cup B|}\right).
\end{equation*}
\end{lemma}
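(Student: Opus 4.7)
The plan is to apply the quasisymmetry condition at a single well-chosen triple of points, in the spirit of the proof of Lemma \ref{lemma-boundeddistortion}. The key idea is to pick the ``center'' point inside $A$ so that the distance to some partner in $B$ equals $\dist(A,B)$, and to pick a third point in $A\cup B$ that captures (up to a factor of $2$) the diameter $|A\cup B|$.

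Concretely, by compactness of $A,B,A\cup B$ (and continuity of the metric), choose $a_0\in A$, $b_0\in B$ realizing $d(a_0,b_0)=\dist(A,B)$ and $x_0,y_0\in A\cup B$ realizing $d(x_0,y_0)=|A\cup B|$. The triangle inequality gives $d(a_0,x_0)+d(a_0,y_0)\geq d(x_0,y_0)=|A\cup B|$, so at least one of $x_0,y_0$, call it $z_0$, satisfies $d(a_0,z_0)\geq \tfrac12|A\cup B|$. Assuming $|A\cup B|>0$ (the degenerate case being trivial, since then $A=B$ is a single point and both sides are zero), we have $z_0\ne a_0$, so the definition of $\eta$-quasisymmetry applied with the triple $(a_0,b_0,z_0)$ yields
\begin{equation*}
    \frac{\rho(f(a_0),f(b_0))}{\rho(f(a_0),f(z_0))}\leq \eta\!\left(\frac{d(a_0,b_0)}{d(a_0,z_0)}\right)\leq \eta\!\left(\frac{2\dist(A,B)}{|A\cup B|}\right),
\end{equation*}
where the last inequality uses that $\eta$ is increasing.

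It remains to replace the left-hand side by the desired ratio. Since $a_0\in A$ and $b_0\in B$, we have $\rho(f(a_0),f(b_0))\geq \dist(f(A),f(B))$, and since $a_0,z_0\in A\cup B$, we have $\rho(f(a_0),f(z_0))\leq |f(A)\cup f(B)|$. Combining these two bounds with the displayed inequality gives the claim.

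There is no significant obstacle: the only thing to be careful about is the existence of a point $z_0\in A\cup B$ at distance at least $\tfrac12|A\cup B|$ from $a_0$, which is the standard ``half-diameter'' trick. The compactness hypothesis is used only to attain the infimum and supremum defining $\dist(A,B)$ and $|A\cup B|$; with a routine $\varepsilon$-approximation one could drop compactness entirely, but the stated hypothesis is all that is needed in later applications.
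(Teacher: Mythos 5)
Your proof is correct and is essentially the argument the paper intends: the paper obtains this lemma by applying Lemma \ref{lemma-boundeddistortion} to the two-point subset $\{a_0,b_0\}\subseteq A\cup B$ realizing $\dist(A,B)$, and your direct half-diameter argument from the definition is exactly that application unfolded. Nothing further is needed.
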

We also record the following variant, which will prove useful when the gaps in $X$ are relatively large, but uniformly distributed.
\begin{lemma}\label{lemma-boundeddistortion-variant}
    Let $f\colon X\to Y$ be a $\eta$-quasisymmetry. Let $A,B\subset X$ be non-empty and compact and assume that $|B|\leq |A|$. Then
    \begin{equation*}
        \frac{\dist(f(A),f(B))}{|f(A)|}\leq 1+\eta\left(2+\frac{\dist(A,B)}{|A|}\right).
    \end{equation*}
\end{lemma}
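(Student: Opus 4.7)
The plan is to use compactness to pick good witness points and then apply the quasisymmetry condition exactly once, with a single well-chosen triple. First, by compactness of $A$ (the case $|A|=0$ being degenerate and excluded implicitly) I pick $a_0, a_1 \in A$ realizing the diameter, that is $d(a_0, a_1)=|A|$. By compactness of $B$, I pick $b \in B$ minimizing $d(a_0, \cdot)$. Since $\dist(f(A), f(B)) \leq \rho(f(a_0), f(b))$, it suffices to bound this latter quantity by an appropriate multiple of $|f(A)|$.

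The second step is to bound the ratio $d(a_0, b)/d(a_0, a_1)$ by $1 + \dist(A, B)/|A|$. Letting $(c, c') \in A \times B$ realize $\dist(A, B)$, the triangle inequality gives $d(a_0, b) \leq d(a_0, c') \leq d(a_0, c) + d(c, c') \leq |A| + \dist(A, B)$, so the ratio is at most $1 + \dist(A, B)/|A|$. Now applying the $\eta$-quasisymmetry condition with $x = a_0$, $y = b$, $z = a_1$ (where $a_0 \ne a_1$),
\begin{equation*}
    \rho(f(a_0), f(b)) \leq \rho(f(a_0), f(a_1)) \cdot \eta\left(\frac{d(a_0, b)}{d(a_0, a_1)}\right) \leq |f(A)| \cdot \eta\left(1 + \frac{\dist(A,B)}{|A|}\right),
\end{equation*}
using $\rho(f(a_0), f(a_1)) \leq |f(A)|$ since $a_0, a_1 \in A$. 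Combined with the first step and monotonicity of $\eta$, this yields the stated inequality (in fact the sharper bound $\dist(f(A),f(B))/|f(A)| \leq \eta(1 + \dist(A, B)/|A|)$, which implies $1 + \eta(2 + \dist(A,B)/|A|)$).

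There is no significant obstacle, but there is a subtlety in the choice of pivot that is worth flagging as the main conceptual step. A less efficient approach would pick $(a, b)$ realizing $\dist(A, B)$ as the base point of the quasisymmetry inequality and then, via a triangle-inequality argument through some far-away point of $A$, secure a denominator of only $|A|/2$; this costs an extra factor of $2$ inside $\eta$ and is what forces the looser form of the bound. Choosing the base point of the quasisymmetry inequality to be a diameter endpoint of $A$ from the outset, so that $d(a_0, a_1) = |A|$ exactly, circumvents this loss. Note that the hypothesis $|B| \leq |A|$ is not actually needed for this stronger form.
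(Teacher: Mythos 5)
Your proof is correct, and it takes a genuinely different route from the paper's. The paper starts from the pair of points realizing $\dist(f(A),f(B))$ in the \emph{image}, which forces a detour through the triangle inequality in $Y$ (producing the additive $1+$) and leaves the domain point $z\in B$ uncontrolled except via $d(x,z)\leq |A|+|B|+\dist(A,B)$, which is where the hypothesis $|B|\leq|A|$ and the $2+$ inside $\eta$ come from. You instead choose all witness points in the \emph{domain}: a diameter pair $a_0,a_1$ of $A$ and a point $b\in B$ closest to $a_0$, then apply the quasisymmetry inequality once with the exact denominator $d(a_0,a_1)=|A|$. This yields the sharper estimate
\begin{equation*}
    \frac{\dist(f(A),f(B))}{|f(A)|}\leq \eta\left(1+\frac{\dist(A,B)}{|A|}\right),
\end{equation*}
which implies the stated bound by monotonicity of $\eta$, and, as you note, dispenses with the hypothesis $|B|\leq|A|$ entirely. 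The only implicit assumptions (that $|A|>0$ so that $a_0\neq a_1$, and that the relevant denominators are nonzero) are shared by the paper's own argument, so this is not a gap. Since the lemma is only ever invoked through the weaker displayed bound, either form suffices for the applications, but your version is cleaner and strictly stronger.
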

\begin{proof}
    Using compactness, choose $x,y\in A$ satisfying $d(x,y)=|A|$ and $x'\in A$, $z\in B$ satisfying $\rho(f(x'),f(z))=\dist(f(A), f(B))$. Then
    \begin{align*}
        \frac{\dist(f(A),f(B))}{|f(A)|}&\leq \frac{\rho(f(x'),f(x))+\rho(f(x),f(z))}{|f(A)|}\leq 1+\frac{\rho(f(x),f(z))}{\rho(f(x),f(y))}\\
        &\leq 1+\eta\left(\frac{d(x,z)}{d(x,y)}\right)\leq 1+\eta\left(\frac{|A|+|B|+\dist(A,B)}{|A|}\right)\\
        &\leq 1+\eta\left(2+\frac{\dist(A,B)}{|A|}\right).
    \end{align*}
\end{proof}

\subsection{Fat Cantor sets}\label{sec-fatcantor}
In this section, we study quasisymmetries on \emph{fat Cantor sets}. These sets have a very uniform structure: they only contain relatively small and uniformly distributed holes at all scales. The sets will play a crucial role in the proof of Theorem \ref{thm:main2} in Section \ref{sec-proofs}, where we will be able to find large copies of them inside typical realizations of the fat fractal percolation process.

The construction goes as follows: Divide the unit square $[0,1]^d$ into $N^{md}$ congruent subcubes of side length $N^{-m}$, remove one of them arbitrarily, and call the collection of retained subcubes $\F_1$. Given $\F_n$ for $n\geq 0$, divide each cube of $\F_n$ into $N^{md}$ congruent subcubes and again remove one of them. Call $\F_{n+1}$ the family of all retained subcubes of each cube of $\F_n$, and define the $(N,m)$\emph{-fat Cantor set} $F$ by setting
\begin{equation*}
    F = \bigcap_{n\geq 0}\bigcup_{Q\in \F_n} Q.
\end{equation*}
For $n<m$, and $Q\in \F_n$, we let
\begin{equation*}
    \F_{m}(Q)=\{Q'\in \F_m\colon Q'\subset Q\}.
\end{equation*}

\begin{proposition}\label{prop-fatcantordimension}
    Let $\eta$ be a distortion function and $0<\alpha<d$. Then for any large enough $m\in\N$, any $(N,m)$-fat Cantor set $F$ and any $\eta$-quasisymmetry $f: F\to f(F)$, we have
    \begin{equation*}
        \dimh f(F) \geq \alpha.
    \end{equation*}
\end{proposition}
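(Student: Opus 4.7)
The plan is to apply a mass distribution argument. Fix $\alpha<d$ and consider the natural probability measure $\mu$ on $F$ that assigns mass $(N^{md}-1)^{-n}$ to each cube $Q\in\F_n$. Since each level-$n$ cube has diameter $\sqrt{d}\,N^{-mn}$, the measure $\mu$ is Ahlfors regular on $F$ with exponent $\alpha_m:=\log(N^{md}-1)/(m\log N)$, and $\alpha_m\nearrow d$ as $m\to\infty$. I will fix $m$ large enough (depending on $\eta$ and $\alpha$) that $\alpha_m>\alpha$, with room to spare for the distortion bookkeeping below.

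Let $\nu:=f_*\mu$ on $f(F)$. By the mass distribution principle it suffices to show $\nu(B(y,r))\lesssim r^{\alpha}$ for every $y\in f(F)$ and every sufficiently small $r>0$. Given such a ball, pick $x\in f^{-1}(y)\cap F$ and let $n=n(r)$ be the generation such that the level-$n$ cube $Q_n\ni x$ has image diameter just below $r$, while the parent cube $Q_{n-1}$ has image diameter at least $r$. Then $\nu(B(y,r))$ equals $(N^{md}-1)^{-n}$ times the number of level-$n$ cubes $Q\in\F_n$ with $f(Q)\cap B(y,r)\neq\emptyset$, so the problem reduces to a combinatorial counting estimate.

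The key structural feature to exploit is that in the fat Cantor set only a single subcube is removed per level, so any two sibling cubes in $\F_n$ within a common parent have relative distance and relative diameter at most $N^{-m}$. By Lemma \ref{lemma-boundeddistortion-dist}, such uniformly small relative gaps in $F$ remain small relative gaps in $f(F)$; applied iteratively across ancestor generations, and combined with Lemma \ref{lemma-boundeddistortion} for passing between diameters at different scales, this lets one bound the cardinality $\#\{Q\in\F_n:f(Q)\cap B(y,r)\neq\emptyset\}$ by a quantity of the form $C\,r^{\alpha_m}(N^{md}-1)^n$. Substituting back yields $\nu(B(y,r))\lesssim r^{\alpha_m}\leq r^{\alpha}$ for $r$ small, and the mass distribution principle closes the argument.

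The main obstacle is carrying out the iterative distortion estimate for an arbitrary distortion function $\eta$, without any power-type control. The role of the hypothesis "$m$ large enough" is precisely to ensure that each application of Lemma \ref{lemma-boundeddistortion-dist} costs only a negligible factor: since $\eta(2N^{-m})\to 0$ as $m\to\infty$ for any fixed $\eta$, by picking $m$ large we can make the distortion per iteration small enough that, even compounded over the many generations that enter the counting, the accumulated loss stays well inside the margin $\alpha_m-\alpha$. Turning this into a quantitative choice of $m$ in terms of $\eta$ and $\alpha$ is the delicate technical step of the proof.
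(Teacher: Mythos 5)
Your overall strategy---push forward the natural uniform measure $\mu$ (mass $(N^{md}-1)^{-n}$ per level-$n$ cube) and verify a Frostman condition for $\nu=f_*\mu$---does not work, and the step you flag as ``delicate'' is in fact false. The counting estimate $\#\{Q\in\F_n\colon f(Q)\cap B(y,r)\neq\emptyset\}\lesssim r^{\alpha_m}(N^{md}-1)^n$ already fails for the single cube $Q_n\ni x$ used to define $n=n(r)$: it forces $|f(Q_n)|\gtrsim (N^{md}-1)^{-n/\alpha_m}\approx |Q_n|$, i.e.\ it forces $f$ to be essentially bi-H\"older with exponent near $1$, which no hypothesis provides. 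Concretely, take $d=1$, a fat Cantor set $F$ containing $0$, and $f(x)=x^{2}$, which is an $\eta$-quasisymmetry for the fixed distortion function $\eta(t)=C\max\{t^{1/2},t^{2}\}$, independent of $m$. Then $|f(Q_n)|\approx N^{-2mn}$ for $Q_n=[0,N^{-mn}]\cap F$, so $\nu(B(f(0),r))\geq \mu(Q_n)\approx r^{\alpha_m/2}$ at $r\approx N^{-2mn}$, and the $\alpha$-Frostman condition fails for every $\alpha>\alpha_m/2$ no matter how large $m$ is. Increasing $m$ improves the \emph{upper} distortion bound $\eta(2N^{-m})\to 0$ that you invoke, but the relevant \emph{lower} bound on $|f(Q_{n+1})|/|f(Q_n)|$ from Lemma \ref{lemma-boundeddistortion} is $\tfrac12\eta(2N^{m})^{-1}$, which only worsens with $m$ and compounds multiplicatively over generations; there is no mechanism in your argument to absorb this loss into the margin $\alpha_m-\alpha$. (A smaller issue: non-adjacent siblings in a common parent have relative distance up to order $1$, not $N^{-m}$, so the iterative application of Lemma \ref{lemma-boundeddistortion-dist} needs a chaining argument through neighbours, not a direct comparison.)

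The missing idea is that the measure must be adapted to the image geometry rather than pushed forward from the domain. The paper fixes $\mu(f([0,1]^d))=1$ and distributes mass among the images of the children of a cube proportionally to $|f(Q)|^{\alpha}$; the Frostman bound $\mu(f(Q))\leq |f(Q)|^{\alpha}$ then reduces to the single-generation sub-additivity inequality $|f(Q_0)|^{\alpha}\leq\sum_{Q\in\F_{n+1}(Q_0)}|f(Q)|^{\alpha}$, which is exactly multiplicative across generations and so never accumulates distortion losses. Proving that inequality is where the geometry of the fat Cantor set enters: one projects the children onto $\R^{d-1}$, chains along each of the $N^{m(d-1)}$ columns using Lemmas \ref{lemma-boundeddistortion} and \ref{lemma-boundeddistortion-dist} (here the smallness of $\eta(16N^{-m})$ for large $m$ is used, in the spirit of your remark) to get $|f(Q_0)|\leq c_\eta\sum_{Q\in\F_{n+1}(D)}|f(Q)|$ per column, and then applies H\"older's inequality, with the factor $c_\eta^{\alpha}N^{m(\alpha-d)}<1$ supplying the choice of $m=m(\eta,\alpha)$. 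Your proposal contains no substitute for this inequality, so the gap is structural rather than merely technical.
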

\begin{proof}
    
    Let $\eta$ be a distortion function and $0<\alpha<d$. Our first aim is to show that there exists $m=m(\eta,\alpha)$, such that the following holds: If $F$ is a $(N,m)$-fat Cantor set and $f\colon F\to f(F)$ is an $\eta$-quasisymmetry, then for any $n\in\N$ and $Q_0\in \F_n$, we have
    \begin{equation}\label{eq-sub-additive}
        |f(Q_0)|^{\alpha}\leq \sum_{Q\in \F_{n+1}(Q_0)}|f(Q)|^{\alpha}.
    \end{equation}
    Let $m$ be a natural number which we will make larger when necessary. Let $F$ be a $(N,m)$-fat Cantor set and fix $Q_0\in \F_n$. To simplify notation, we write $\F_{n+1}=\F_{n+1}(Q_0)$. Consider $\R^{d-1}$ as the subspace of $\R^d$ spanned by the first $d-1$ vectors from the natural basis of $\R^d$ and let $\pi\colon \R^{d}\to \R^{d-1}$ denote the orthogonal projection from $\R^d$ onto $\R^{d-1}$. We adopt the convention $\R^0 = \lbrace 0\rbrace$. Define
    \begin{equation*}
        \mathcal{Q}_{n+1}^{d-1}=\{\pi(Q)\colon Q\in \F_{n+1}\},
    \end{equation*}
    which is a collection of $N^{(n+1)m}$-adic cubes in $\R^{d-1}$. Since $\#\F_{n+1}=N^{dm}-1$, we have that $\#\mathcal{Q}_{n+1}^{d-1}=N^{m(d-1)}$. For each $D\in \mathcal{Q}_{n+1}^{d-1}$, let
    \begin{equation*}
        \F_{n+1}(D)=\{Q\in \F_{n+1}\colon \pi(Q)=D\}.
    \end{equation*}

    Now let us fix $D\in \mathcal{Q}_{n+1}^{d-1}$ and enumerate $\F_{n+1}(D)=\{Q_1,\ldots,Q_k\}$ in an ascending order, where $k$ is either $N^{m}$ or $N^{m}-1$. Notice that for all large enough $m$, we have $|\bigcup_{i=1}^kQ_i\cap F|\geq \tfrac{1}{2}N^{-nm}$ and therefore
    \begin{equation*}
        \frac{|\bigcup_{i=1}^kf(Q_i)|}{|f(Q_0)|}\geq \frac{1}{2}\eta\left(\frac{|Q_0\cap F|}{|\bigcup_{i=1}^kQ_i\cap F|}\right)^{-1}\geq \frac{1}{2}\eta(2)^{-1}.
    \end{equation*}
    For large enough $m$, we see from the definition of the $(N,m)$-fat Cantor set that if $Q_i$ and $Q_{i+1}$ are adjacent cubes in $\F_{n+1}(D)$ then $\dist(Q_i\cap F,Q_{i+1}\cap F)\leq 4N^{-(n+2)m}$, except when there is a missing cube of level $(n+1)m$ between $Q_{i}$ and $Q_{i+1}$, in which case $\dist(Q_{i}\cap F,Q_{i+1}\cap F)\leq 4N^{-(n+1)m}$. Since the second case is the worst case scenario, we will assume that there is an index $1<j<k$, where the second case happens. Since $\eta$ is increasing and $|(Q_i\cup Q_{i+1}) \cap F|\geq N^{-(n+1)m}$, we have by Lemma \ref{lemma-boundeddistortion} that
    \begin{equation*}
        \frac{\dist(f(Q_i),f(Q_{i+1}))}{|f(Q_i)\cup f(Q_{i+1})|}\leq\eta\left(2\frac{\dist(Q_{i}\cap F,Q_{i+1}\cap F)}{|(Q_i\cup Q_{i+1}) \cap F|}\right)\leq \eta(8N^{-m})\leq \eta(16N^{-m}),
    \end{equation*}
     for all $i\ne j$ and similarly, since $|Q_0\cap F|\geq \tfrac{1}{2}N^{-mn}$,
    \begin{equation*}
        \frac{\dist(f(Q_j),f(Q_{j+1}))}{|f(Q_0)|}\leq \eta\left(\frac{2\dist(Q_j\cap F,Q_{j+1}\cap F)}{|Q_0\cap F|}\right) \leq \eta(16N^{-m}).
    \end{equation*}
    By noticing that $|f(Q_i)\cup f(Q_{i+1})|\leq |f(Q_i)|+|f(Q_{i+1})|+\dist(Q_i,Q_{i+1})$ for all $i=1,\ldots,k$, if we take $m$ large enough so that $\eta(16N^{-m})\leq \tfrac{1}{2}$, we have
    \begin{equation*}
        |f(Q_i)\cup f(Q_{i+1})|\leq 2(|f(Q_i)|+|f(Q_{i+1})|),
    \end{equation*}
    for all $i\ne j$. Now we use the fact that $\left|\bigcup_{i=1}^{\ell}A_i\right|\leq \sum_{i=1}^{\ell-1} |A_i\cup A_{i+1}|$ for any collection of sets $\{A_i\}$ so in particular
    \begin{align*}
        \frac{1}{2}&\eta(2)^{-1}|f(Q_0)|\leq \left|\bigcup_{i=1}^{j}f(Q_i)\right|+\left|\bigcup_{i=j+1}^{k}f(Q_i)\right|+\dist(f(Q_{j}),f(Q_{j+1}))\\
        &\leq \sum_{i=1}^{j-1}|f(Q_i)\cup f(Q_{i+1})|+\sum_{i=j+1}^{k-1}|f(Q_i)\cup f(Q_{i+1})|+\eta(16N^{-m})|f(Q_0)|\\
        &\leq 4\sum_{i=1}^k|f(Q_i)|+\frac{1}{4}\eta(2)^{-1}|f(Q_0)|,
    \end{align*}
    where we take $m$ larger if necessary to make sure that $\eta(16N^{-m})\leq \tfrac{1}{4}\eta(2)^{-1}$.
    In other words, for any $D\in\mathcal{Q}_{n+1}^{d-1}$, we have
    \begin{equation}\label{eq-cetabound}
        |f(Q_0)|\leq c_{\eta}\sum_{Q\in \F_{n+1}(D)}|f(Q)|,
    \end{equation}
    where $c_{\eta}=16\eta(2)$.

    Let now $0<\alpha<d$. Since
    \begin{equation*}
        \sum_{D\in \mathcal{Q}_{n+1}^{d-1}}\sum_{Q\in \F_{n+1}(D)}|f(Q)|=\sum_{Q\in \F_{n+1}}|f(Q)|,
    \end{equation*}
    by \eqref{eq-cetabound} and Hölder's inequality, we get
    \begin{align*}
        N^{m(d-1)}|f(Q_0)|&\leq c_{\eta}\sum_{Q\in \F_{n+1}}|f(Q)|\leq c_{\eta}\left(\sum_{Q\in \F_{n+1}}|f(Q)|^{\alpha}\right)^{\frac{1}{\alpha}}N^{\frac{dm(\alpha-1)}{\alpha}}.
    \end{align*}
    Therefore
   \begin{equation*}
        |f(Q_0)|^{\alpha}\leq c_{\eta}^{\alpha}N^{m(\alpha-d)}\sum_{Q\in \F_{n+1}}|f(Q)|^{\alpha},
    \end{equation*}
    and since $c_{\eta}^{\alpha}N^{m(\alpha-d)}\to 0$ as $m\to \infty$, \eqref{eq-sub-additive} follows.

    Now our aim is to define a measure $\mu$ on $f(F)$ such that for any $n\in\N$ and $Q\in \F_n$, 
    \begin{equation*}
    \mu(f(Q)) \leq |f(Q)|^{\alpha}.
    \end{equation*}
    This will be enough to prove the claim, since it is easy to see that using standard methods, see e.g. \cite[Lemma 4.5]{Hakobyan2009}, that $\mu$ is an $\alpha$-Frostman measure on $f(F)$ and therefore $\dimh f(F) \geq \alpha$. 
    
    Let $\mu(f(F)) = 1$, and for each $n\in\N$ and $Q_n\in \F_n$, define
    \begin{equation}\label{eq-measure-def}
        \mu(f(Q_n)) = \frac{|f(Q_n)|^\alpha}{\sum_{Q \in \F_{n+1}(Q^*)} |f(Q)|^\alpha} \mu(f(Q^*)) 
    \end{equation}
    where $Q^*\in \F_{n-1}$ is the unique cube containing $Q_n$. By Caratheodory's extension theorem, this defines a probability measure $\mu$ on $f(F)$. 
    
    Now for any $Q_n\in \F_n$, if $Q_{\ell}\in \E_{\ell}$, $\ell=0,\ldots, n-1$ are the unique cubes such that $Q_{n}\subseteq Q_{n-1}\subseteq\cdots\subseteq Q_0 = [0,1]^d$, we have
    \begin{align*}
        \frac{\mu(f(Q_n))}{|f(Q_n)|^\alpha} &= \prod_{\ell=1}^n \frac{|f(Q_{\ell-1})|^\alpha}{ \sum_{Q_* \in \F_{n+1}(Q_{\ell-1})} |f(Q_*)|^\alpha}.
    \end{align*}
    It follows from \eqref{eq-sub-additive} that
    \begin{equation*}
    \prod_{\ell=1}^n \frac{|f(Q_{\ell-1})|^\alpha}{ \sum_{Q_* \in \F_{n+1}(Q_{\ell-1})} |f(Q_*)|^\alpha}\leq1,
    \end{equation*}
    which finishes the proof
\end{proof}

\subsection{Dense Cantor sets}\label{sec-uf-flat-cantor}
In this section, we modify the results of the previous section for a related class of fractals we call \emph{dense Cantor sets}. The idea is similar to the construction of fat Cantor sets, but the uniformly distributed holes in these sets are allowed to be much larger compared to the size of the cubes in the construction, and in particular, the ratio of the size of the holes to the size of the construction cubes is allowed to grow to infinity at a controlled rate. Unfortunately, for precisely this reason, we are only able to extend Proposition \ref{prop-fatcantordimension} for power quasisymmetries in this setting.

Let $(N_n)_n$ be an increasing sequence of integers and assume that $N_1\geq 2$. For simplicity we assume that $N_n=2^{k_n}$ are dyadic, where $k_n$ is an increasing sequence of integers. Let $\mathcal{Q}_1$ denote the partition of the unit square $Q_0\subset \R^d$ to $N_1^d$ congruent subcubes of side length $N_1^{-1}$. Let $\E_1\subset \mathcal{Q}_1$ denote an arbitrary subcollection of the cubes. Divide each cube $Q\in \E_1$ to $N_2^d$ congruent subcubes of side length $N_1^{-1}N_2^{-1}$, choose an arbitrary sub collection denoted by $\E_2(Q)$ and let $\E_2=\bigcup_{Q\in \E_1}\E_2(Q)$. Continue this process indefinitely and let
\begin{equation*}
    E=\bigcap_{k=1}^{\infty}\bigcup_{Q\in\E_k}Q.
\end{equation*}
For a sequence $(\Delta_n)_{n\in\N}$ with $\Delta_n>0$, we call a set $E$ as above a \emph{$(\Delta_n)_n$-dense Cantor set} if for all large enough $n\in\N$ and every $Q\in\E_n$, 
\begin{equation}\label{eq-uniformly-dense}
    \sup\left\{|A|\colon A\subset Q\setminus \bigcup \E_{n+1}\ \text{is a line segment parallel to}\right\} \le \Delta_{n+1}|Q|.
\end{equation}
Recall that here $\pi:\R^d\to\R^{d-1}$ denotes the orthogonal projection to the first $d-1$ coordinates. Informally, \eqref{eq-uniformly-dense} means that the union of descendants of $Q$ contains no ``vertical'' gaps of diameter $\Delta_{n+1}|Q|$. In this section, we prove the following proposition.
\begin{proposition}\label{prop-dense-cantor-space}
    Let $E\subset \R^d$ be a $\left(\frac{\log N_{n}}{N_{n}}\right)_n$-dense Cantor set and let $f\colon E\to f(E)$ be a power quasisymmetry. Then
    \begin{equation*}
        \dimh f(E)=d.
    \end{equation*}
\end{proposition}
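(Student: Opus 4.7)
The plan is to follow the outline of Proposition \ref{prop-fatcantordimension}. Fix $0<\alpha<d$; the goal is to establish a sub-additive inequality $|f(Q_0)|^\alpha \leq \sum_{Q \in \E_{n+1}(Q_0)} |f(Q)|^\alpha$ for all sufficiently large $n$ and every $Q_0 \in \E_n$, from which the telescoping Frostman construction at the end of the proof of Proposition \ref{prop-fatcantordimension} produces an $\alpha$-Frostman measure on $f(E)$ and hence $\dimh f(E) \geq \alpha$; letting $\alpha \nearrow d$ then finishes the proof. The new obstacle compared to Proposition \ref{prop-fatcantordimension} is that at the natural scale $|Q_0|/N_{n+1}$ of the level-$(n+1)$ cubes, adjacent cubes may be separated in $E$ by gaps of size $\Delta_{n+1}|Q_0| = \tfrac{\log N_{n+1}}{N_{n+1}}|Q_0|$, which is $\log N_{n+1}$ times the cube side. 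The distance estimates of Lemma \ref{lemma-boundeddistortion-dist} are then useless at this scale, so the column-chaining step cannot be applied directly at level $n+1$. The remedy is a two-step argument: first run the chaining at a coarser intermediate scale where the gaps are still relatively small, then transfer the resulting bound down to level $n+1$ using the power quasisymmetry hypothesis.

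For the first step, I take $M := M_{n+1}$ to be the largest power of two not exceeding $N_{n+1}/(\log N_{n+1})^2$, so that $\Delta_{n+1}M \lesssim 1/\log N_{n+1}\to 0$, and partition $Q_0$ into the $M^d$ dyadic subcubes $T_M$. Three consequences of the gap condition \eqref{eq-uniformly-dense} hold: (i) every $R\in T_M$ meets $E$, since the maximum vertical gap is smaller than the cube side $|Q_0|/M$; (ii) any two cubes $R_i,R_{i+1}$ adjacent in a column $D\in\pi(T_M)$ have $E$-distance $\lesssim \Delta_{n+1}|Q_0|$; and (iii) the $E$-diameter of each column is at least $|Q_0|/2$. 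These are exactly the hypotheses needed to rerun the column-by-column chaining argument of Proposition \ref{prop-fatcantordimension} verbatim at scale $M$, with Lemma \ref{lemma-boundeddistortion-dist} supplying the small ratio $\dist(f(R_i),f(R_{i+1}))/|f(R_i)\cup f(R_{i+1})|\lesssim \eta(1/\log N_{n+1})\to 0$. Combining the column-wise chaining with Hölder's inequality across the $M^{d-1}$ columns yields
\begin{equation*}
    |f(Q_0)|^\alpha \leq c_1 M^{\alpha-d} \sum_{R\in T_M} |f(R)|^\alpha,
\end{equation*}
with $c_1$ depending only on $\eta$.

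For the second step, I bound each $|f(R)|^\alpha$ from above by a sum over the level-$(n+1)$ cubes inside $R$, and this is where the power quasisymmetry hypothesis enters decisively. For $R\in T_M$ and $Q\in \E_{n+1}(R)$ the ratio $|R|/|Q| = N_{n+1}/M = (\log N_{n+1})^2$, so Lemma \ref{lemma-boundeddistortion} combined with $\eta(t) = C\max\{t^\beta,t^{1/\beta}\}$ gives $|f(R)|\lesssim (\log N_{n+1})^{2/\beta}|f(Q)|$. Applying the gap condition inside $R$ column-by-column (at most $\log N_{n+1}$ consecutive level-$(n+1)$ positions in any vertical line can be missing), one sees that $\#\E_{n+1}(R)\gtrsim (\log N_{n+1})^{2d-1}$. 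Averaging the pointwise bound over these cubes produces
\begin{equation*}
    |f(R)|^\alpha \lesssim (\log N_{n+1})^{2\alpha/\beta - 2d + 1}\sum_{Q\in\E_{n+1}(R)} |f(Q)|^\alpha,
\end{equation*}
and summing over $R$ and substituting into the previous display gives a sub-additive inequality with overall prefactor
\begin{equation*}
    A_n \lesssim N_{n+1}^{\alpha-d}(\log N_{n+1})^{2\alpha(1/\beta-1)+1}.
\end{equation*}
Since $\alpha<d$, the polynomial decay $N_{n+1}^{\alpha-d}$ dominates, $A_n\to 0$, and hence $A_n\leq 1$ for all large $n$. The main obstacle is precisely this transfer step: for a general distortion function $\eta$, $\eta((\log N_{n+1})^2)$ can grow faster than any polynomial in $\log N_{n+1}$, which would swallow the $N_{n+1}^{\alpha-d}$ decay and destroy the argument. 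The polynomial growth of power distortion functions is exactly what allows the transfer to go through, and is what forces the stronger hypothesis in Theorem \ref{thm:main2}.
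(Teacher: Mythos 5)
Your two-step derivation of the construction-scale inequality $|f(Q_0)|^\alpha \leq \sum_{Q\in\E_{n+1}(Q_0)}|f(Q)|^\alpha$ is sound and is a legitimate alternative to the paper's Lemma \ref{lemma-sub-additive}: where the paper establishes one-level sub-additivity at each dyadic scale in a $\gamma$-fraction window between construction levels, you chain at the single intermediate scale $M\approx N_{n+1}/(\log N_{n+1})^2$ and pay a $(\log N_{n+1})^{O(1/\beta)}$ transfer cost that is absorbed by $N_{n+1}^{\alpha-d}$. The bookkeeping in your prefactor $A_n$ checks out.

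The genuine gap is the sentence claiming that this inequality feeds into ``the telescoping Frostman construction at the end of the proof of Proposition \ref{prop-fatcantordimension}'' to produce an $\alpha$-Frostman measure. In Proposition \ref{prop-fatcantordimension} that step is legitimate because consecutive construction scales have the \emph{bounded} ratio $N^m$, so by Lemma \ref{lemma-boundeddistortion} the image diameters of nested construction cubes at consecutive levels are comparable, and the standard covering argument converts $\mu(f(Q))\leq|f(Q)|^\alpha$ on construction cubes into $\mu(B(y,r))\lesssim r^\alpha$ for \emph{all} $r$. Here the ratio $N_{n+1}$ is unbounded, so $|f(Q)|$ for $Q\in\E_{n+1}$ can be smaller than $|f(Q^*)|$, $Q^*\in\E_n$, by a factor anywhere between $N_{n+1}^{-\beta}$ and $N_{n+1}^{-1/\beta}$. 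For radii $r$ in the wide intermediate range $|f(Q)|\ll r\ll |f(Q^*)|$ you have no upper bound on $\mu(B(y,r))$: covering $B(y,r)$ by images of level-$(n+1)$ cubes, the number of cubes needed times $\max_Q|f(Q)|^\alpha$ need not be $\lesssim r^\alpha$ when $f$ genuinely distorts like a power, and the naive estimate only yields a local dimension of order $\alpha\beta^2$ at those radii, which after $\alpha\nearrow d$ falls short of $d$. This is exactly the difficulty the paper flags (``we want to control the measure of cubes at all scales, not just at the construction scales'') and resolves by distributing the mass with an exponent $t>\alpha$ along a $\gamma$-fraction of the dyadic scales between construction levels, skipping the remaining scales, and using the power-quasisymmetry bounds in both directions — the loss $N_{n_0+\ell+1}^{t(1-\gamma)/\beta}$ against the gain $|f(Q)|^{\alpha-t}\gtrsim N_{n_0+\ell+1}^{\gamma\beta(t-\alpha)}$ — with the constraint $\gamma>\tfrac{t}{t+(t-\alpha)\beta^2}$ making the two balance. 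Your proof needs an analogue of this intermediate-scale control; your single intermediate scale $M$ does not supply it, since the ratios from $\E_n$ to $T_M$ are still unbounded. Relatedly, your closing diagnosis slightly misplaces where the power hypothesis is indispensable: the paper's sub-additivity lemma holds for arbitrary $\eta$, and it is the measure estimate at the skipped scales that forces $f$ to be a power quasisymmetry.
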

This will follow by constructing for each $0<\alpha<d$ a Frostman measure on $f(E)$ similarly as in the proof of Proposition \ref{prop-fatcantordimension}, once we establish a suitable analogue of \eqref{eq-sub-additive}. There are additional technical difficulties in this setting compared to the setting of the previous section, mainly arising from the fact that we have no control over the difference between two scales that follow each other in the construction. This is a technical problem in the construction of the Frostman measure, since we want to control the measure of cubes at all scales, not just at the construction scales. However, if the cubes at level $n+1$ of the construction are much smaller than the cubes at level $n$, the gaps between adjacent level $n+1$ cubes are small enough so that we can establish \eqref{eq-sub-additive} on a large number of scales between levels $n$ and $n+1$. This is the content of the following lemma.
\begin{lemma}\label{lemma-sub-additive}
    Let $\eta$ be a distortion function, $0< \alpha< d$ and $E$ be a $\left(\frac{\log N_{n}}{N_{n}}\right)_n$-dense Cantor set. Let $f\colon E\to f(E)$ be an $\eta$-quasisymmetry. Then for any $0<\gamma<1$, for all large enough $n$, if $Q\in\mathcal{D}_k(E)$ with $\sum_{m=1}^nk_{m}\leq k < \sum_{m=1}^{n}k_{m}+\gamma k_{n+1}$, then
    \begin{equation*}
        |f(Q)|^\alpha \leq \sum_{Q'\in \mathcal{D}_{k+1}(E\cap Q)} |f(Q')|^\alpha.
    \end{equation*}
\end{lemma}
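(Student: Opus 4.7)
The plan is to adapt the column-based sub-additivity argument from the proof of Proposition~\ref{prop-fatcantordimension}, this time exploiting the vertical density of $E$ guaranteed by the dense Cantor set hypothesis. Since $\sum_m k_m \le k < \sum_m k_m + \gamma k_{n+1}$, the cube $Q$ lies inside a unique level-$n$ construction cube $R$, its diameter satisfies $|Q| \gtrsim |R| N_{n+1}^{-\gamma}$ (with a constant depending only on $d$), and $Q$ contains at least $N_{n+1}^{1-\gamma}$ level-$(n+1)$ construction cubes per coordinate direction. Grouping the $2^d$ dyadic children $\{Q_\sigma\}_{\sigma\in\{0,1\}^d}$ of $Q$ into $2^{d-1}$ vertical columns $D_\tau = \{Q_{(\tau,0)}, Q_{(\tau,1)}\}$ indexed by $\tau \in \{0,1\}^{d-1}$, I would first prove a per-column estimate $|f(Q)| \le C(|f(Q_{(\tau,0)})| + |f(Q_{(\tau,1)})|)$, then sum over columns and apply a Hölder-type step.

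Iterating the $(\log N_m/N_m)_m$-density condition at construction levels $\ge n+1$, one verifies that on every vertical line $\ell$ through $Q$ the complementary gaps in $\ell \cap E$ have length at most $\epsilon_n |Q|$, where $\epsilon_n = O(\log N_{n+1}/N_{n+1}^{1-\gamma}) \to 0$ as $n \to \infty$. Consequently, for each column $\tau$, one has $\dist(Q_{(\tau,0)} \cap E,\, Q_{(\tau,1)} \cap E) \le \epsilon_n |Q|$, while $|(Q_{(\tau,0)} \cup Q_{(\tau,1)}) \cap E|$ stays comparable to $|Q|$. Combining Lemma~\ref{lemma-boundeddistortion} applied to $(Q_{(\tau,0)} \cup Q_{(\tau,1)}) \cap E \subset Q \cap E$, whose diameter ratio is bounded by a dimensional constant, with Lemma~\ref{lemma-boundeddistortion-dist} applied to the pair $(Q_{(\tau,0)} \cap E,\, Q_{(\tau,1)} \cap E)$, whose relative gap $\epsilon_n$ makes $\eta(2\epsilon_n) \le \tfrac{1}{2}$ for large $n$, I would absorb the distance term into $|f(Q_{(\tau,0)})| + |f(Q_{(\tau,1)})|$ to obtain the desired per-column bound. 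Summing over the $2^{d-1}$ columns and applying Hölder's inequality (for $\alpha \ge 1$) or the subadditivity of $x \mapsto x^\alpha$ (for $\alpha \le 1$) across the $2^d$ children then converts the linear estimates into
\begin{equation*}
|f(Q)|^\alpha \le C^\alpha\, 2^{\alpha - d} \sum_\sigma |f(Q_\sigma)|^\alpha.
\end{equation*}

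The main obstacle is showing that the coefficient $C^\alpha 2^{\alpha - d}$ does not exceed $1$. The factor $2^{\alpha - d} < 1$ supplies the slack created by $\alpha < d$, but this slack shrinks to nothing as $\alpha \to d$, whereas the constant $C$ coming from the naive use of Lemma~\ref{lemma-boundeddistortion} depends only on $\eta$ and $d$, not on $n$. I expect this to be overcome by sharpening the column bound through a Proposition~\ref{prop-fatcantordimension}-style iteration directly inside $Q$ at the level-$(n+1)$ construction scale: because $Q$ contains $\gtrsim N_{n+1}^{1-\gamma}$ such cubes per coordinate direction, that iteration produces a kill factor of order $N_{n+1}^{(1-\gamma)(\alpha - d)} \to 0$ absorbing $C^\alpha$, and the resulting estimate can then be transferred back to the dyadic scale $k+1$. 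Choosing $n$ large enough in terms of $\eta$, $\alpha$ and $\gamma$ thereby closes the argument.
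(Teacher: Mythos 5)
Your setup --- grouping the dyadic children of $Q$ into vertical columns, bounding the in-column gaps via the $(\log N_n/N_n)_n$-density condition, and turning Lemmas \ref{lemma-boundeddistortion} and \ref{lemma-boundeddistortion-dist} into a per-column linear estimate before applying H\"older --- is the same mechanism the paper uses (the paper invokes Lemma \ref{lemma-boundeddistortion-variant} for the gap term, but that is cosmetic). You have also put your finger on the genuine quantitative crux: after summing over the columns of a single dyadic subdivision one is left with a coefficient of the form $C_\eta^\alpha 2^{\alpha-d}$, and the slack $2^{\alpha-d}<1$ does not absorb the fixed constant $C_\eta>1$ coming from the distortion lemmas. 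The paper extracts its vanishing factor from the cardinality count in the H\"older step (the number of columns versus the number of children), not from any iteration, so your route to killing the constant is genuinely different --- and unfortunately it does not close.

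The problem is the ``transfer back to the dyadic scale.'' Running the Proposition \ref{prop-fatcantordimension}-style iteration over the level-$(n+1)$ construction cubes $R\subset Q$ would give $|f(Q)|^\alpha\le \varepsilon_n\sum_{R}|f(R)|^\alpha$ with $\varepsilon_n\approx C_\eta^\alpha N_{n+1}^{(1-\gamma)(\alpha-d)}$. To convert this into the claimed bound over the $2^d$ dyadic children $Q'$ you must dominate $\sum_{R\subset Q'}|f(R)|^\alpha$ by a controlled multiple of $|f(Q')|^\alpha$ --- but this is precisely the \emph{reverse} of the sub-additivity the lemma asserts, and it fails: the only inequality available in that direction is $|f(R)|\le|f(Q')|$, which costs the factor $\#\{R\subset Q'\}\approx N_{n+1}^{(1-\gamma)d}$, and $\varepsilon_n\cdot N_{n+1}^{(1-\gamma)d}\approx C_\eta^\alpha N_{n+1}^{(1-\gamma)\alpha}\to\infty$. (Indeed, iterating the lemma itself from level $k+1$ down to the construction scale shows $\sum_{R\subset Q'}|f(R)|^\alpha\ge|f(Q')|^\alpha$, and the discrepancy can be as large as the cube count, so no constant works.) Thus the step that was supposed to overcome the obstacle you correctly identified is missing, and the argument as proposed does not prove the lemma. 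A secondary point: for the per-column estimate to be summed over all $2^{d-1}$ columns with the H\"older gain you quote, you also need every column (indeed every child) to meet $E$ in a set of diameter comparable to $|Q'|$; this does follow from \eqref{eq-uniformly-dense}, since every vertical line through the ambient construction cube must meet $\bigcup\E_{n+1}$ with gaps at most $\Delta_{n+1}$ times its diameter, but it should be stated, as it is exactly what rules out $E\cap Q$ collapsing onto a single column.
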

\begin{proof}
    Fix $\beta>0$, $0< \alpha < d$ and let $0<\gamma<1$. Let $E\subset\R^d$ be a $\left(\frac{\log N_{n}}{N_{n}}\right)_n$-dense Cantor set and let $Q\in \mathcal{D}_k$, where $\sum_{m=1}^nk_{m}\leq k < \sum_{m=1}^{n}k_{m}+\gamma k_{n+1}$. 
    Let
    \begin{equation*}
        \mathcal{Q}_{k+1}'(Q)=\{\pi(Q)\colon Q\in\mathcal{D}_{k+1}(E\cap Q)\}.
    \end{equation*}
    Let $D\in\mathcal{Q}_{k+1}'(Q)$ and let $\{Q_1, \ldots, Q_M\}$ denote the enumeration of $\mathcal{E}_{k+1}(D)\coloneqq \{Q\in \mathcal{D}_{k+1}(E\cap Q)\colon \pi(Q)=D\}$ in ascending order. It follows from \eqref{eq-uniformly-dense} and the assumption on $k$ that
    \begin{equation*}
        \dist(Q_i\cap E,Q_{i+1}\cap E)\leq \frac{\log N_{n+1}}{N_{n+1}}2^{-\sum_{m=1}^n k_m}\leq 2^d\frac{\log N_{n+1}}{N_{n+1}^{1-\gamma}}|Q_i|\leq \frac{1}{2}|Q_i|.
    \end{equation*}
    Moreover, it follows from \eqref{eq-uniformly-dense} with $\Delta_n=\log N_n/N_n$ that for all large enough $n$, we have
    \begin{equation*}
        |Q_i\cap E|\geq \frac{1}{2}|Q_i|.
    \end{equation*}
    Combining the previous two inequalities and applying Lemma \ref{lemma-boundeddistortion-variant}, we have for all large enough $n$, that
    \begin{align}\label{eq-upperbound}
        |f(Q)| & = \left|\bigcup_{i=1}^Mf(Q_i)\right|\leq \sum_{i=1}^M |f(Q_{i})| + \sum_{i=1}^{M-1}{\dist}(f(Q_i), f(Q_{i+1}))\nonumber\\
        &\leq \left(2+\eta\left(2+4\frac{\log N_n}{N_{n+1}^{(1-\gamma)}}\right)\right)\sum_{i=1}^{M} |f(Q_i)|\leq 3\eta(3)\sum_{Q'\in \E_{k+1}(D)}|f(Q')|.
    \end{align}
    Note that
    \begin{equation*}
        \sum_{D\in \mathcal{Q}'_{k+1}(Q)}\sum_{Q'\in \E_{k+1}(D)}|f(Q)|=\sum_{Q'\in \mathcal D_{k+1}(Q\cap E)}|f(Q')|,
    \end{equation*}
    and by again using \eqref{eq-uniformly-dense} and our assumption on $k$, $\#\mathcal{Q}'_{k+1}(Q)= 2^{(d-1)k}$. If $\alpha<d=1$, then it follows from \eqref{eq-upperbound} and Lemma \ref{lemma-boundeddistortion} that 
    \begin{align*}
        |f(Q)|^\alpha &\leq \ 3\eta(3) \sum_{Q'\in \mathcal D_{k+1}(Q\cap E)}|f(Q')|^\alpha \cdot \left(\frac{|f(Q')|}{|f(Q)|}\right)^{1-\alpha} \\
        &\leq 3\eta(3)\eta(2N_{k+1}^{-1})^{1-\alpha}\sum_{Q'\in \mathcal D_{k+1}(Q\cap E)}|f(Q')|^\alpha
    \end{align*}
    which proves the claim since $N_{k+1}\to \infty$ as $n\to \infty$. If $1<\alpha<d$, then applying Hölder's inequality in addition to \eqref{eq-upperbound} we get
    \begin{align*}
        2^{(d-1)k}|f(Q)|&\leq 3\eta(3)\sum_{Q'\in \mathcal D_{k+1}(Q\cap E)}|f(Q')|\\
        &\leq 3\eta(3)\left(\sum_{Q'\in \mathcal D_{k+1}(Q\cap E)}|f(Q')|^{\alpha}\right)^{\frac{1}{\alpha}}2^{\frac{dk(\alpha-1)}{\alpha}}
    \end{align*}
    which implies
   \begin{equation*}
        |f(Q)|^{\alpha}\leq 3^{\alpha}\eta(3)^{\alpha}2^{(\alpha-d)k}\sum_{Q'\in \mathcal D_{k+1}(Q\cap E)}|f(Q)|^{\alpha}.
    \end{equation*}
    Since $\alpha<d$ and $k\to \infty$ as $n\to\infty$, we get the claim. 
\end{proof}
Note that we did not require $f$ to be a power quasisymmetry in the previous lemma. This assumption comes into play when we want to control the measures of cylinders at scales $\sum_{m=1}^{n}k_{m}+\gamma k_{n+1}<k<\sum_{m=1}^{n+1}k_{m}$.
\begin{proof}[Proof of Proposition \ref{prop-dense-cantor-space}]
As in the proof of Proposition \ref{prop-fatcantordimension}, it suffices to find for for all $0<\alpha<d$, a measure $\mu$ on $f(E)$, which satisfies $\mu(f(Q))\lesssim |f(Q)|^{\alpha}$, for all $Q\in\mathcal{D}(Q)$.

Let $0<\alpha<t<d$, and let $\frac{t}{t+(t-\alpha)\beta^2}<\gamma<1$. Let $n_0$ be large enough, such that Lemma \ref{lemma-sub-additive} holds with the exponent $t$, for all $Q\in\mathcal{D}_k(E)$ with $\sum_{m=1}^nk_{m}\leq k < \gamma \sum_{m=1}^{n+1}k_{m}$ and $n\geq n_0$. Take $Q_0\in\mathcal{D}_{\sum_{m=1}^{n_0}k_{m}}(E)$ and let $\mu(f(Q_0)) = 1$. Construct a measure on $\mu(f(Q_0))$ by setting for each $Q\in\mathcal{D}_k(Q_0)$ with $\sum_{m=1}^{n_0}k_{m}< k < \sum_{m=1}^{n_0}k_{m}+\gamma k_{n_0+1}$,
\begin{equation}\label{eq-measure-def-large-scales}
    \mu(f(Q)) = \frac{|f(Q)|^t}{\sum_{Q \in \mathcal{D}_{k+1}(Q^*\cap E)} |f(Q)|^t} \mu(f(Q^*)) 
\end{equation}
where $Q^*\in \mathcal{D}_{k-1}$ is the unique cube containing $Q$. After this, for $Q\in \mathcal{D}_{\sum_{m=1}^{n_0+1}k_{m}}$, we set
\begin{equation}\label{eq-measure-def-small-scales}
    \mu(f(Q)) = \frac{|f(Q)|^t}{\sum_{Q \in \mathcal{D}_{\sum_{m=1}^{n_0+1}k_{m}}(Q^*\cap E)} |f(Q)|^t} \mu(f(Q^*)),
\end{equation}
where $Q^*\in \mathcal{D}_{k_0'}$ is the unique cube containing $Q$ and $k_0'$ is the largest integer smaller than $\sum_{m=1}^{n_0}k_{m}+\gamma k_{n+1}$. Continue dividing mass with this process: for every $\ell\in \N$,  distribute mass on $Q\in\mathcal{D}_{k}$ for $\sum_{m=1}^{n_0+\ell}k_{m}< k < \sum_{m=1}^{n_0+\ell}k_{m}+\gamma k_{n_0+\ell+1}$ as in \eqref{eq-measure-def-large-scales} and then skip to scale $ \sum_{m=1}^{n_0+\ell+1}k_{m}$ and use \eqref{eq-measure-def-small-scales}. We now claim that for any $k\geq \sum_{m=1}^{n_0}k_m$ and $Q\in\mathcal{D}_{k}(Q_0)$, we have
\begin{equation}\label{eq:alpha-frostman}
    \mu(f(Q))\leq |f(Q)|^{\alpha}.
\end{equation}
If $Q\in\mathcal{D}_k$ for some $\sum_{m=1}^{n_0+\ell}k_{m}\leq k < \sum_{m=1}^{n_0+\ell}k_{m}+\gamma k_{n_0+\ell+1}$ and $\ell\in\N$, then \eqref{eq:alpha-frostman} indeed holds with $t$ in place of $\alpha$ (and thus in particular with the exponent $\alpha$), with the same argument as in the proof of Proposition \ref{prop-fatcantordimension}.

It remains to prove \eqref{eq:alpha-frostman} for $Q\in\mathcal{D}_k$ with $\sum_{m=1}^{n_0+\ell}k_{m}+\gamma k_{n_0+\ell+1}< k \leq \sum_{m=1}^{n_0+\ell+1}k_{m}$ and $\ell\in\N$. Let $Q_1\in\mathcal{D}_{k'}$ be the unique cube satisfying $Q\subset Q_1$, where $k'$ is the largest integer smaller than $\sum_{m=1}^{n_0+\ell}k_{m}+\gamma k_{n_0+\ell+1}$. Since $|Q| \geq 2^{-\sum_{m=1}^{n_0+\ell+1}k_{m}}$, Lemma \ref{lemma-boundeddistortion} asserts that
\begin{align*}
    \mu(f(Q))&\leq |f(Q_1)|^{t}\lesssim \eta\left(\frac{2^{-\sum_{m=1}^{n_0+\ell}k_m-\gamma k_{n_0+\ell+1}}}{2^{-\sum_{m=1}^{n_0+\ell+1}k_m}}\right)^t|f(Q)|^t\\
    &\lesssim N_{n_0+\ell+1}^{\frac{t(1-\gamma)}{\beta}}|f(Q)|^t.
\end{align*}
Furthermore, Lemma \ref{lemma-boundeddistortion} also implies that
\begin{equation}\label{eq-ineq-not-optimal}
    |f(Q)|\lesssim |f([0,1]^d)|N_{n_0+\ell+1}^{-\gamma\beta},
\end{equation}
whence
\begin{equation*}
    |f(Q)|^{\alpha-t}\gtrsim N_{n_0+\ell+1}^{\gamma\beta(t-\alpha)}.
\end{equation*}
Since $\gamma >\frac{t}{t+(t-\alpha)\beta^2}$, the preceding inequalities give
\begin{equation*}
    \mu(f(Q))\lesssim |f(Q)|^{\alpha-t}|f(Q)|^{t}=|f(Q)|^{\alpha},
\end{equation*}
finishing the proof.
\end{proof}

\section{Branching processes}\label{sec-branching}

In this section, we describe the probabilistic framework which allows us to find large subsets of the fractal percolations. Whereas the offspring distribution of the classical fractal percolation model follows a simple Galton-Watson process, we require a more general framework.

For every $n\in\N$, let us fix a finite index set $\Gamma_n$, with $\#\Gamma_n=N_n$, and denote by $\mathcal{P}_n$ the set of all probability measures supported on the collection of subsets of $\Gamma_n$. Our standing assumptions are that $(N_n)_{n\in\N}$ is a non-decreasing sequence of integers and $N_1\geq 2$. We define a \emph{branching process} as follows. Start with a root node $\varnothing$ and pick an arbitrary probability measure $\eta_{\varnothing}\in\mathcal{P}_1$. Generate \emph{children} for $\varnothing$ by drawing a random subset $\Gamma_{\varnothing}\subset \Gamma_1$ with respect to $\eta_{\varnothing}$. For each $i\in\Gamma_{\varnothing}$ pick a probability measure $\eta_{i}\in\mathcal{P}_2$ and draw a subset $\Gamma_i\subset \Gamma_2$ with respect to $\eta_i$. Continue this process iteratively: for every $\mtt{i}\coloneqq i_1i_2\cdots i_n\in\Gamma_\varnothing\times \Gamma_{i_1}\times \Gamma_{i_1i_2}\times\ldots\times \Gamma_{i_1\cdots i_{n-1}}$ and $j\in\Gamma_{\mtt{i}}$, pick a probability measure $\eta_{\mtt{i}j}\in\mathcal{P}_{n+1}$ and draw a random subset $\Gamma_{\mtt{i}j}\subset\Gamma_{n+1}$  with respect to $\eta_{\mtt{i}j}$. An alternative way to view the process is to start by fixing an arbitrary collection of probability measures
\begin{equation*}
    H\coloneqq \bigcup_{k=0}^{\infty}\{\eta_{i_1\cdots i_k}\in\mathcal{P}_k\colon i_1\in \Gamma_1,\ldots,i_k\in \Gamma_k\},
\end{equation*}
and running the process described above for this fixed collection. For a fixed $H$  we call
\begin{equation*}
    T(H)\coloneqq\{i_1i_2\cdots\in\Gamma_{1}\times\Gamma_{2}\times\cdots\colon i_k\in \Gamma_{i_1\cdots i_{k-1}}\forall k\in\N\},
\end{equation*}
the \emph{family tree} of the branching process. Given a sequence $p_n$ with $0<p_n<1$ for all $n$, we call the tree $T(H)$ \emph{$(p_n)_n$-thick} if, for all $n$,
\begin{equation}\label{eq-np-admissible}
    \PP_{i_1\cdots i_n}(\#\Gamma_{i_1\cdots i_n}=N_{n+1})\geq p_n
\end{equation}
where $\PP_{i_1\cdots i_n}$ is the law of $\#\Gamma_{i_1\cdots i_n}$. In our applications, the collection of probability measures $H$ is left implicit, and since the definition of $(p_n)_n$-thickness only depends on the laws $\PP_{i_1\cdots i_n}$, going forward, we suppress the dependence on $H$ from the notation and instead, call a tree $T$ $(p_n)_n$-thick if for each $i_1\cdots i_n$, the law $\PP_{i_1\cdots i_n}$ satisfies \eqref{eq-np-admissible}.

We say that the tree $T$ contains a \emph{$(N_{n}-1)_n$-subtree} if for all $i_1i_2\cdots\in T$ and $n=0,1,\ldots$,
\begin{equation*}
    \#\Gamma_{i_1\cdots i_n}\geq N_{n+1}-1,
\end{equation*}
that is, all nodes of level $n$ have at least $N_{n+1}-1$ children. The following result, which is a generalization of \cite[Theorem 5.29]{LyonsPeres2016}, is the main result of this section.

\begin{theorem}\label{thm-completesubtree}
    If $T$ is $(1-N_{n}^{-6})_n$-thick, then there exists $p_0>0$ depending only on $(N_n)_{n\in\N}$ such that
    \begin{equation*}
        \PP(T\text{ contains a $(N_{n}-1)_n$-subtree})\geq p_0.
    \end{equation*}
\end{theorem}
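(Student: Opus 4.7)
The plan is to bound, by induction on depth $L$, the worst-case probability that a level-$n$ node fails to host a depth-$L$ subtree in which every internal node at level $m$ has at least $N_{m+1}-1$ children. The key quantitative observation is that allowing one missing child per node converts the one-step failure event into a \emph{quadratic} event in the children's failure probabilities; combined with the $N_{n+1}^{-6}$ slack in the thickness hypothesis, this lets a self-consistent bound of order $N_{n+1}^{-3}$ propagate down the tree.

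For $n,L\geq 0$, let $S_n^{(L)}$ be the event that the branching process rooted at a level-$n$ node contains a sub-subtree of depth $L$ in which every node at level $m$, $n\leq m<n+L$, has at least $N_{m+1}-1$ children, and let $\delta_n^{(L)}\coloneqq\sup_H\PP((S_n^{(L)})^c)$, the supremum ranging over admissible collections $H$ satisfying the thickness assumption from level $n$ onwards. Trivially $\delta_n^{(0)}=0$. For the inductive step, condition on the root's offspring: by thickness the root has its full complement of $N_{n+1}$ children with probability at least $1-N_{n+1}^{-6}$, and conditional on this the children's subtrees are independent. In the worst case any deficiency at the root is fatal, and otherwise $S_n^{(L)}$ fails only if at least two of the $N_{n+1}$ children fail their own depth-$(L-1)$ event. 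A union bound then gives
\begin{equation*}
    \delta_n^{(L)} \leq N_{n+1}^{-6} + \binom{N_{n+1}}{2}\bigl(\delta_{n+1}^{(L-1)}\bigr)^2 \leq N_{n+1}^{-6} + N_{n+1}^{2}\bigl(\delta_{n+1}^{(L-1)}\bigr)^2.
\end{equation*}

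Inducting on $L$, the hypothesis $\delta_m^{(L-1)}\leq N_{m+1}^{-3}$ for every $m$ together with the monotonicity $N_{n+2}\geq N_{n+1}\geq 2$ yields
\begin{equation*}
    \delta_n^{(L)} \leq N_{n+1}^{-6}+N_{n+1}^{2}\cdot N_{n+2}^{-6}\leq N_{n+1}^{-6}+N_{n+1}^{-4} \leq (1+N_{n+1}^{-2})N_{n+1}^{-4} \leq N_{n+1}^{-3},
\end{equation*}
closing the induction. Since $S_0^{(L+1)}\subseteq S_0^{(L)}$ and each compatible partial subtree at depth $L$ lives in a finite set, König's lemma applied sample-path-wise identifies $\bigcap_L S_0^{(L)}$ with the event that $T$ contains a $(N_n-1)_n$-subtree, and therefore
\begin{equation*}
    \PP(T\text{ contains a }(N_n-1)_n\text{-subtree}) \geq 1-\lim_{L\to\infty}\delta_0^{(L)} \geq 1-N_1^{-3}\geq \tfrac{7}{8},
\end{equation*}
which proves the theorem with the uniform constant $p_0=7/8$. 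The one delicate point is recognising that the ``at most one missing child per node'' slack is essential: for a full $(N_n)_n$-subtree the one-step failure is linear in $\delta_{n+1}^{(L-1)}$, and the analogous recursion simply fails to close under a $(1-N_n^{-6})_n$ thickness hypothesis.
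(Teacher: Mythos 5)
Your proof is correct and takes essentially the same route as the paper's: the paper encodes the identical one-step recursion via the marking functions $g_k(s)$ (which bound $\PP(\mathrm{Bin}(N_k,s)\geq 2)$ plus the root-deficiency term) and controls the composition $g_1\circ\cdots\circ g_n(0)$ by a self-consistent polynomial bound ($4N_k^{-5}$ there, your $N_{k+1}^{-3}$ here), with both arguments hinging on the quadratic gain from the one-missing-child slack that you correctly flag as essential. The two proofs even arrive at the same constant $p_0 = 7/8$.
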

\begin{proof}
Consider a random process where we mark each child of a node $\mtt{i}$ of level $n$ independently, and the probability of marking any given child is at least $1-s$. Since $T$ is $(1-N_{n}^{-6})_n$-thick, the probability that $\mtt{i}$ has at most $N_{n+1}-2$ marked children is at most
\begin{align*}
    \sum_{k=0}^{N_{n+1}}&\PP_{\mtt{i}}(\#\Gamma_{\mtt{i}}=k)\sum_{j=0}^{N_{n+1}-2}\binom{k}{j}(1-s)^js^{k-j}\\
    &\leq\sum_{k=0}^{N_{n+1}-1}N_{n+1}^{-6}\sum_{j=0}^{k}\binom{k}{j}(1-s)^js^{k-j}+\sum_{j=0}^{N_{n+1}-2}\binom{N_{n+1}}{j}(1-s)^js^{N_{n+1}-j}\\
    &\leq \sum_{j=0}^{N_{n+1}-2}\binom{N_{n+1}}{j}(1-s)^js^{N_{n+1}-j}+N_{n+1}^{-5}\\
    &=1-(1-s)^{N_{n+1}}-N_{n+1}s(1-s)^{N_{n+1}-1}+N_{n+1}^{-5}\eqcolon g_{n+1}(s).
\end{align*}

Let $q_n$ denote the probability that $T$ does not contain a $(N_{n}-1)_n$-subtree of height $n$, and $\tau := \lim_{n\to\infty} 1-q_n$ the probability that $T$ contains a full $(N_{n}-1)_n$-subtree. By marking a child of $\varnothing$ if it is retained, we see using the above that $q_1\leq g_1(0)$. Furthermore, by marking a child of $\varnothing$ if it has at least $N_2-1$ children, we have $q_2\leq g_1(g_2(0))$. By iterating this process---marking a child of $\varnothing$ if it has at least $N_2-1$ children which have at least $N_3-1$ children, and so on---with similar reasoning we have
\begin{equation*}
    q_n \leq g_1\circ g_2\circ\cdots\circ g_n(0).
\end{equation*}
It remains to show that there exists a constant $c<1$, such that $g_1\circ g_2\circ\cdots\circ g_n(0)\leq c$, for all $n\in\N$. Simple computations show that $g'_k(0) = 0$, $g_k^{(\ell)}(0) \leq N_k^{\ell+1}$ for $2\leq \ell \leq N_k$ and $g_k^{(\ell)}(0)= 0$ for $\ell > N_k$. Thus by Taylor's theorem at $0$, for any $k\in\N$ and $s\in[0,N_k^{-1}/2)$,
\begin{equation}\label{eq:taylors-thm}
    g_k(s)\leq N_k^{-5}+ N_k \sum_{\ell=2}^{N_k} (N_k s)^\ell \leq N_k^{-5} + 2 N_k^3 s^2.
\end{equation}

We now claim that
\begin{equation*}
    g_{k}\circ g_{k+1}\circ\ldots\circ g_n(0)\leq 4N_k^{-5},
\end{equation*}
for all $k=1,\ldots,n$. This follows by induction starting from $k=n$, since clearly $g_n(0)=N_n^{-5}\leq4 N_n^{-5}$, and if 
\begin{equation*}
    g_{k+1}\circ g_{k+2}\circ\ldots\circ g_n(0)\leq 4N_{k+1}^{-5},
\end{equation*}
then by \eqref{eq:taylors-thm},
\begin{equation*}
    g_{k}\circ g_{k+1}\circ\ldots\circ g_n(0)\leq N_k^{-5}+8N_{k}^{3} N_{k+1}^{-10}\leq 4N_k^{-5}.
\end{equation*}
Since $N_1\geq 2$, this gives the claim with $c=\frac{1}{8}$.
\end{proof}

\section{Proofs of Theorems \ref*{thm:main1} and \ref*{thm:main2}}\label{sec-proofs}
In this section we prove our main results. When we say that a set $E\subset \R^d$ \emph{contains} a set $A$, we mean that there exists a homothety $h\colon \R^2\to\R^2$, such that $h(A)\subset E$. Combined with results of Section \ref{sec-fatcantor}, the following result will yield a proof for Theorem \ref{thm:main1}. 
\begin{theorem}\label{thm-fatsubsets}
    Let $\mathbf{p}\coloneqq (p_n)_{n=0}^{\infty}$ be such that $\lim_{n\to\infty}p_n=1$. Then for any $m\in\N$, the set $F(N,\mathbf{p})$ contains a $(N,m)$-fat Cantor set, almost surely conditioned on non-extinction.
\end{theorem}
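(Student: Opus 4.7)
The plan is to reduce the claim to Theorem \ref{thm-completesubtree} by grouping $m$ consecutive percolation levels into a single ``super-level''. I would call a level-$mn$ cube of the percolation a super-level-$n$ cube, and define a branching super-process by declaring the children of a surviving super-level-$n$ cube $Q$ to be those super-level-$(n+1)$ subcubes of $Q$ that also survive in the percolation. The maximum possible number of children is $N^{md}$, and any $(N^{md}-1)_n$-subtree of this super-process yields, at each super-level, at least $N^{md}-1$ retained subcubes out of $N^{md}$. Dropping one extra retained subcube where needed produces a family $\F_n$ of cubes of side $N^{-mn}$ with exactly $N^{md}-1$ subcubes inside each parent, which is precisely the structure defining a $(N,m)$-fat Cantor set. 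Moreover, survival of a level-$m(n+1)$ cube in the percolation automatically entails survival of all its ancestors at the intermediate levels $mn+1,\ldots,m(n+1)-1$, so the resulting fat Cantor set is genuinely contained in $F(N,\mathbf{p})$ after a homothety onto the super-level-$n_0$ root cube.

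Next, I would verify the thickness hypothesis of Theorem \ref{thm-completesubtree}. The probability that a super-level-$n$ cube $Q$ has all $N^{md}$ possible children is the probability that every descendant of $Q$ down to level $m(n+1)$ is retained by the percolation, which by independence equals
\begin{equation*}
    \prod_{j=1}^{m} p_{mn+j}^{N^{jd}}.
\end{equation*}
Since $p_k \to 1$, this tends to $1$ as $n \to \infty$, so one can choose $n_0 = n_0(N,m,\mathbf{p})$ large enough that this probability exceeds $1-N^{-6md}$ for every $n \geq n_0$. Applying Theorem \ref{thm-completesubtree} with the constant sequence $N_n \equiv N^{md}$ then gives $p_0 = p_0(N,m) > 0$ such that the super-process rooted at any given surviving level-$mn_0$ cube contains a $(N^{md}-1)_n$-subtree, and hence an embedded $(N,m)$-fat Cantor set, with probability at least $p_0$.

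Finally, I would upgrade positive probability to almost sure existence conditional on non-extinction. The sub-processes rooted at distinct surviving level-$mn_0$ cubes are mutually independent, so the conditional probability that none of them contains a $(N^{md}-1)_n$-subtree is at most $(1-p_0)^{Y_{mn_0}}$, where $Y_{mn_0}$ is the number of surviving level-$mn_0$ cubes. Since $p_n \to 1 > N^{-d}$, the percolation is eventually supercritical, and a standard inhomogeneous Galton--Watson comparison shows $Y_{mn_0} \to \infty$ almost surely as $n_0 \to \infty$, conditional on non-extinction. For every fixed $n_0$, the event that $F(N,\mathbf{p})$ contains no $(N,m)$-fat Cantor set is included in the event that no surviving level-$mn_0$ cube hosts one, so its conditional probability is bounded above by $\EE[(1-p_0)^{Y_{mn_0}} \mid \text{non-extinction}]$. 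Letting $n_0 \to \infty$ and applying dominated convergence, this bound tends to $0$, forcing the conditional probability of no fat Cantor set to equal $0$.

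The main obstacle is calibrating the super-level grouping: to match the $(N,m)$-fat Cantor set construction, the super-branching must be exactly $N^{md}$, while Theorem \ref{thm-completesubtree} demands thickness at least $1-N^{-6md}$. Both conditions are met simultaneously only because $p_n\to 1$, which makes $\prod_{j=1}^{m} p_{mn+j}^{N^{jd}}$ approach $1$ quickly enough once $n$ is sufficiently large; a secondary point to handle carefully is the inhomogeneous Galton--Watson statement $Y_{mn_0}\to\infty$ conditional on non-extinction, which in the supercritical tail follows from a straightforward comparison with a homogeneous supercritical process.
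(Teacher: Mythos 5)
Your proposal is correct and follows essentially the same route as the paper: group $m$ percolation levels into one super-level, check that the resulting tree is $(1-N^{-6md})$-thick using $\prod_{j=1}^{m}p_{mn+j}^{N^{jd}}\to 1$, apply Theorem \ref{thm-completesubtree} with the constant sequence $N^{md}$, and then upgrade positive conditional probability to almost-sure containment via independence of the subtrees rooted at surviving cubes and the growth of their number on non-extinction (the paper delegates this last step to a cited standard lemma, but the substance is the same).
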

\begin{proof}
    Let $F=F(N,\mathbf{p})$. Notice first that for a fixed $m\in\N$, it suffices to show that there exists $p_0>0$, such that for any large enough $n\in\N$, and any $N$-adic cube $Q_0$ of level $n$,
    \begin{equation}\label{eq-posprob}
        \mathbb{P}(F\cap Q_0\text{ contains a $(N,m)$-fat Cantor set}\cond Q_0\in\mathcal{F}_n)\geq p_0.
    \end{equation}
    This can be seen to imply the claim, for example, by a simple application of Markov's inequality and the Borel-Cantelli lemma, see e.g. \cite[Lemma 3.1]{SuomalaShmerkin2020}
    
    Fix an integer $m\in\N$ and let $\varepsilon>0$ and let $n\in\N$ be large enough, such that
    \begin{equation*}
        \prod_{\ell=1}^m p_{n+k+\ell-1}^{N^{\ell d}}\geq 1-N^{-6md},
    \end{equation*}
    for all $k\in\N$. Consider a branching process with a root node $Q_0\in\F_n$ and children $\F_{n+m}(Q_0)$. For each $Q\in \F_{n+m}(Q_0)$ we consider the cubes in $\F_{n+2m}(Q)$ the children of $Q$ and denote the tree obtained by iterating this process with $T(Q_0)$. Evidently $F\cap Q_0$ contains a $(N,m)$-fat Cantor set if and only if the tree $T(Q_0)$ contains a complete $N^{md}-1$-ary subtree. Since for any $k\in\N$ and $Q\in\F_{n+km}(Q_0)$,
    \begin{equation*}
        \PP(\#\F_{n+(k+1)m}(Q_0)= N^m)=\prod_{\ell=1}^m p_{n+km+l-1}^{N^{\ell d}}\geq 1-N^{-6md},
    \end{equation*}
    the tree $T(Q_0)$ is $(1-N^{-6md})$-thick, and \eqref{eq-posprob} follows from Theorem \ref{thm-completesubtree}.
\end{proof}

The following corollary is immediate.
\begin{corollary}
    For any $\textbf{p} = (p_n)_{n\in\N}$ such that $\lim_{n\to\infty}p_n=1$, almost surely conditioned on non-extinction, the set $E_{\textbf{p}}$ contains a $(N,m)$-fat Cantor set for all $m\in\N$.
\end{corollary}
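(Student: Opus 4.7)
The plan is to deduce the corollary from Theorem~\ref{thm-fatsubsets} by a standard countable intersection argument. For each fixed $m \in \N$, Theorem~\ref{thm-fatsubsets} asserts that the event
\begin{equation*}
    A_m \coloneqq \{F(N,\mathbf{p}) \text{ contains a } (N,m)\text{-fat Cantor set}\}
\end{equation*}
satisfies $\PP(A_m \mid \text{non-extinction}) = 1$. The statement that $F(N,\mathbf{p})$ contains a $(N,m)$-fat Cantor set for \emph{every} $m \in \N$ is precisely the occurrence of the intersection $\bigcap_{m \in \N} A_m$, and being a countable intersection of events of full conditional probability, this intersection itself has full conditional probability under $\PP(\,\cdot \mid \text{non-extinction})$. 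This yields the corollary.

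There is essentially no obstacle to overcome in this step: all of the genuine probabilistic work---the reduction to a branching process and the application of Theorem~\ref{thm-completesubtree}---has already been carried out in the proof of Theorem~\ref{thm-fatsubsets}, and the only additional ingredient needed is countable subadditivity of the conditional measure. In particular, one does not need to quantify the rate of convergence $p_n \to 1$ any further than what Theorem~\ref{thm-fatsubsets} already demands, and one does not need to re-execute the branching argument for each $m$ in a coupled fashion.
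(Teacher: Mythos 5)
Your proof is correct and matches the paper's approach: the paper simply declares the corollary ``immediate'' from Theorem~\ref{thm-fatsubsets}, and the countable intersection over $m$ of full-conditional-probability events is precisely the implicit argument. Nothing further is needed.
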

We are now ready to prove Theorem \ref{thm:main1}.
\begin{proof}[Proof of Theorem \ref{thm:main1}]
 Let $F=F(N,\textbf{p})$ be a realization of the fat fractal percolation with a sequence of probabilities $\textbf{p} = (p_n)_{n\in\N}$ satisfying $\lim_{n\to \infty}p_n=1$, which contains a $(N,m)$-fat Cantor set for all $m\in\N$. By the previous corollary, conditioned on non-extinction, this is an event of probability one. Let $\eta$ be a distortion function, $0<\alpha<d$ and $f\colon F\to f(F)$ be a $\eta$-quasisymmetry. Let $m\in\N$ be large enough, such that Proposition \ref{prop-fatcantordimension} holds for the $(N,m)$-fat Cantor set $F_m\subset F$. Since $f|_{F_m}$ is an $\eta$-quasisymmetry, Proposition \ref{prop-fatcantordimension} gives
 \begin{equation*}
     \dimh f(F)\geq \dimh f(F_m)\geq \alpha,
 \end{equation*}
 which gives the claim by taking $\alpha\to d$.
\end{proof}

Let us proceed to the proof of Theorem \ref{thm:main2}. We will use the following analogue of Theorem \ref{thm-fatsubsets}, which immediately implies Theorem \ref{thm:main2} by Proposition \ref{prop-dense-cantor-space}.

\begin{theorem}
    Let $p>0$ and $\mathbf{N}=(N_n)_n$ be an increasing sequence of natural numbers. There exists a constant $C>0$ such that almost surely conditioned on non-extinction, the set $E(\mathbf{N},p)$ contains a $\left(C\tfrac{\log N_{n+k}}{N_{n+k}}\right)_k$-dense Cantor set for some $n \in \N$.
\end{theorem}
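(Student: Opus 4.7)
The overall strategy mirrors the proof of Theorem~\ref{thm-fatsubsets}: by a Borel--Cantelli argument identical to the one given there, it suffices to establish a uniform lower bound $p_0>0$, independent of $n\in\N$ and of the choice of percolation cube $Q_0$ at level $n$, for the probability that the percolation descendants of $Q_0$ contain a $(C\log N_{n+k}/N_{n+k})_k$-dense Cantor set. Combining this with non-extinction (which guarantees many retained cubes at each level, independently across disjoint cubes) and Markov's inequality then delivers the almost sure conclusion for some $n$.

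The main probabilistic input is a single-level gap estimate. For a cube $Q$ at percolation level $n$, let $G(Q)$ denote the event that every line segment in $Q\setminus\bigcup\E_{n+1}(Q)$ parallel to the last coordinate direction has length at most $C\log N_{n+1}/N_{n+1}\cdot |Q|$. A union bound over the $N_{n+1}^{d-1}$ one-dimensional columns of subcubes of $Q$ and over the $N_{n+1}$ possible starting positions of a missing run of length $C\log N_{n+1}$ gives
\begin{equation*}
    \PP(G(Q)^c) \leq N_{n+1}^d (1-p)^{C\log N_{n+1}} = N_{n+1}^{d - C\log(1/(1-p))},
\end{equation*}
so for any prescribed $\alpha>0$ the bound $\PP(G(Q)^c)\leq N_{n+1}^{-\alpha}$ can be arranged by taking $C$ large enough in terms of $\alpha$, $p$, and $d$.

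I would then adapt the branching process used in the proof of Theorem~\ref{thm-fatsubsets}: at each cube $Q$ in the percolation tree rooted at $Q_0$, declare $Q$ to be \emph{alive} if $G(Q)$ holds, and take the children of an alive $Q$ in the process to be its alive retained subcubes. Since a retained subcube is independently alive with probability at least $1-N_{n+2}^{-\alpha}$ (its aliveness depends on level-$(n+2)$ percolation), the expected number of alive children of an alive cube at level $n$ is at least $pN_{n+1}^d(1-N_{n+2}^{-\alpha})$, which exceeds $1$ once $n$ is large enough (using $N_n\to\infty$). A standard supercritical Galton--Watson argument then yields survival with positive probability uniformly in $Q_0$. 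Every cube in the surviving subtree satisfies $G$, so the natural candidate is to take the collections $\E_k^{DCS}(Q)$ to consist of the retained subcubes of $Q$ that lie in the surviving subtree.

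The hardest step is to verify that this surviving subtree actually realises the DCS gap condition: because $p<1$ the process is not thick in the sense of Theorem~\ref{thm-completesubtree}, and the pruning of retained cubes that fail $G$ might in principle enlarge the vertical gaps beyond $C\log N_{n+1}/N_{n+1}$. The plan for handling this is to fix $\alpha$ large enough that, by concentration, no column of $Q$ contains a pruned cube with high probability; that is, the expected number of retained-but-not-alive subcubes of $Q$ is at most $N_{n+1}^d\cdot p \cdot N_{n+2}^{-\alpha}$, which for $\alpha$ chosen sufficiently large (and $N_{n+2}\geq N_{n+1}$) is vanishingly small. Consequently, with high conditional probability the pruned configuration has the same gaps as the full percolation configuration in $Q$, up to a constant factor absorbed by a slightly larger $C$. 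Combined with the fact that the DCS definition requires the gap bound to hold only \emph{for all large enough $n$}, so that finitely many exceptional levels are permitted, this closes the positive-probability step and the theorem follows by the Borel--Cantelli argument from the proof of Theorem~\ref{thm-fatsubsets}.
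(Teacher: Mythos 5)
Your reduction to a uniform positive-probability bound for a single cube $Q_0$, and your single-level gap estimate $\PP(G(Q)^c)\leq N_{n+1}^d(1-p)^{C\log N_{n+1}}$, match the paper. The genuine gap is in how you pass from this to a surviving \emph{dense} structure. A supercritical Galton--Watson survival argument only produces \emph{some} infinite subtree; it says nothing about that subtree filling essentially every column of every cube, which is what the $(\Delta_k)_k$-dense condition \eqref{eq-uniformly-dense} demands at every node. Your proposed patch --- that with high probability no retained subcube of $Q$ is pruned, so the alive configuration coincides with the percolation configuration --- is a per-cube event with failure probability roughly $N_{n+k+1}^{d-\alpha}$, and you would need it to hold simultaneously for all cubes at all levels. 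A union bound cannot deliver this with a fixed constant $C$ (hence fixed $\alpha$): the number of cubes at level $k$ grows like $\prod_{j\leq k}N_{n+j}^d$, which eventually overwhelms $N_{n+k+1}^{d-\alpha}$ for any fixed $\alpha$. Nor does the ``finitely many exceptional levels'' clause rescue you, since the failures are per-cube rather than per-level, and a single bad cube at a deep level violates the definition, which quantifies over every $Q\in\E_k$.

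The paper resolves exactly this by reorganising the branching process so that Theorem \ref{thm-completesubtree} applies: a node of the tree is a percolation cube $Q$, its full set of children is \emph{all} of its retained percolation subcubes (trimmed to a deterministic count $M_k$), and the node ``has its full complement of children'' precisely when the gap event \eqref{eq-small-holes2} holds, which occurs with probability at least $1-M_k^{-5}$. This makes the tree $(1-M_k^{-5})_k$-thick even though $p<1$ --- the observation you flagged as an obstruction is circumvented by changing what counts as a child, not by making individual subcubes survive more often. Theorem \ref{thm-completesubtree} then yields, with positive probability, a subtree in which each node loses at most one child; the quadratic recursion $g_k(s)\lesssim N_k^{-5}+N_k^3s^2$ in its proof is precisely the mechanism that beats the growth of the tree where a first-moment or union-bound argument fails. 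Finally, deleting at most one subcube per parent enlarges the vertical gaps only by a bounded factor, which is absorbed into the constant $C$ (the paper's $6d$ becomes $27d$). Without this reformulation and the complete-subtree theorem, your argument does not close.
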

\begin{proof}
    Again, we note that it suffices to show that there exists $C, p_0>0$, such that for any large enough $n\in\N$ and any cube $Q_0$ with side length $\prod_{m=1}^nN_m^{-1}$,
    \begin{equation*}
        \mathbb{P}(E\cap Q_0\text{ contains a $\left(C\tfrac{\log N_{n+k}}{N_{n+k}}\right)_k$-dense Cantor set}\cond Q_0\in\mathcal{E}_n)\geq p_0.
    \end{equation*}

    Let us denote by $\overline{\log}$ the logarithm in base $(1-p)^{-1}$. Choose a surviving cube $Q_0\in\mathcal{E}_n$, and consider the following branching process conditional on $Q_0 \in \mathcal E_n$. 
    
    Set $Q_0$ as the root node of the branching process, and retain $Q_0$ if
    \begin{align}\label{eq-small-holes}
        &\sup\left\{|A|\colon A\subset Q_0\setminus \bigcup\mathcal{E}_{n+1}\ \text{is a line segment parallel to}\ \ker \pi\right\}\nonumber \\
        &\leq \frac{6d\,\overline{\log} N_{n+1}}{N_{n+1}}|Q_0|,
    \end{align}
    and otherwise discard $Q_0$. Informally, $Q_0$ is retained if any only if there is no ``vertical'' block of $6d\overline{\log}N_{n+1}$-many cubes removed from $Q_0$ in the next construction step of $E$. If $Q_0$ is retained, let $\E_{1}' := \lbrace Q \in \E_{n+1}:\ Q\subset Q_0\rbrace$ denote the descendants of $Q_0$. 
    
    For each $k > 1$ with $\E_{k-1}'$ defined, for each cube $Q\in\mathcal{E}_{k-1}'$ we repeat the process, that is, retain $Q$ if and only if no ``vertical'' block of $6d\overline{\log}N_{n+k}$-many cubes is removed from $Q_0$ in the next construction step, that is,
    \begin{align}\label{eq-small-holes2}
        &\sup\left\{|A|\colon A\subset Q\setminus \bigcup\mathcal{E}_{n+k}\ \text{is a line segment parallel to}\ \ker \pi\right\} \nonumber\\
        &\leq \frac{6d\,\overline{\log} N_{n+k}}{N_{n+k}}|Q|.
    \end{align}
    Denote the descendants of $Q$ by $\mathcal{E}_{k}'(Q) = \lbrace Q' \in \E_{n+k}:\ Q'\subset Q\rbrace$. By trimming the collections $\E_{k}'(Q)$ for every $Q \in \E_{k-1}$ while retaining the property \eqref{eq-small-holes2} and replacing $6$ by $13$, we may suppose that $\#\E_{k}'(Q)$ is constant, denoted by $M_k$. Note that as a formal consequence of \eqref{eq-small-holes2}, we may do the trimming in a way that $N_{n+k}^{d-1} \frac{N_{n+k}}{6d \overline{\log} N_{n+k}} \leq M_k \leq 2N_{n+k}^{d-1} \frac{N_{n+k}}{6d \overline{\log} N_{n+k}}$, in particular, $(M_k)_{k\in\N}$ is non-decreasing once $n$ is large enough. Let $\E_{k}' = \bigcup_{Q\in \E_{k-1}'} \E_k'(Q)$. Having defined $\E_k'$ for every $k>1$ in this way, let
    \begin{equation*}
        E'=\bigcap_{k> 1}\bigcup_{Q\in\E'_k}Q.
    \end{equation*}
    Next we observe that conditioned on $\mathcal E_{k}'$ for $k>1$, the probability that $Q \in \mathcal E_k'$ is not retained (that is, \eqref{eq-small-holes2} fails and a ``vertical'' block of at least $6d \overline{\log} N_{n+k+1}$-many cubes is removed from $Q$) is at most
    \begin{equation*}
        N_{n+k+1}^d(1-p)^{6d \overline{\log} N_{n+k+1}}\leq N_{n+k+1}^{-5d}\leq  M_{n+k+1}^{-5}
    \end{equation*}
    Therefore the family tree of the branching process which defines $E'$ is $(1-M_{n+k+1}^{-5})_{k\in\N}$-thick and thus contains a $(M_{k}-1)_k$-subtree with probability at least $p_0>0$ by Theorem \ref{thm-completesubtree}. We denote the subset of $E'$ corresponding to this subtree by $E'' = \bigcap_{k>1} \bigcup_{Q\in \E''_k} Q$. Since the sets $\E''_k$ are formed by removing at most one cube from each set $\lbrace Q \in \E'_k:\ Q\subset Q'\rbrace$, $Q'\in \E'_{k-1}$, it follows that for each $k>1$ and $Q\in \E''_k$, no ``vertical'' block of $26d\,\overline{\log} N_{n+k+1}+1$-many cubes is removed from $Q$ in the next construction step, that is,
   \begin{align*}
        &\sup\left\{|A|\colon A\subset Q\setminus \bigcup\mathcal{E}''_{n+k+1}\ \text{is a line segment parallel to}\ \ker \pi\right\} \nonumber\\
        &\leq \frac{26d\,\overline{\log} N_{n+k+1} + 1}{N_{n+k+1}}|Q| \leq \frac{27d\,\overline{\log} N_{n+k+1}}{N_{n+k+1}}|Q|.
    \end{align*}
    In particular, the set $E''$ is a $(\frac{27d \overline{\log} N_{n+k}}{N_{n+k}})_k$-dense Cantor set. This proves the statement with $C = -27d\log(1-p)$.
\end{proof}

\bibliographystyle{abbrv}
\bibliography{Bibliography}

\end{document}